\documentclass{article}
\usepackage[margin=3cm]{geometry}

\usepackage{amsmath,amssymb,amsfonts,amsthm}
\usepackage{graphicx}
\usepackage{mathrsfs}
\usepackage{upgreek}
\usepackage{stackrel}
\usepackage{mathpazo}

\usepackage{xcolor}
\usepackage{hyperref}
\usepackage[nameinlink,noabbrev]{cleveref} 
\usepackage{algorithm}
\usepackage{algpseudocode}
\usepackage{enumerate}
\usepackage{bbm}
\usepackage{verbatim}
\usepackage{subcaption}
\usepackage{accents}

\usepackage[utf8]{inputenc}

\DeclareUnicodeCharacter{2080}{\textsubscript{0}}
\DeclareUnicodeCharacter{2081}{\textsubscript{1}}
\DeclareUnicodeCharacter{2082}{\textsubscript{2}}
\DeclareUnicodeCharacter{2083}{\textsubscript{3}}
\DeclareUnicodeCharacter{2084}{\textsubscript{4}}
\DeclareUnicodeCharacter{2085}{\textsubscript{5}}
\DeclareUnicodeCharacter{2086}{\textsubscript{6}}
\DeclareUnicodeCharacter{2087}{\textsubscript{7}}
\DeclareUnicodeCharacter{2088}{\textsubscript{8}}
\DeclareUnicodeCharacter{2089}{\textsubscript{9}}
\DeclareUnicodeCharacter{207D}{\textsuperscript{(}}
\DeclareUnicodeCharacter{207E}{\textsuperscript{)}}

\newtheorem{theorem}{Theorem}[section]
\newtheorem{proposition}[theorem]{Proposition}
\newtheorem{lemma}[theorem]{Lemma}
\newtheorem{corollary}[theorem]{Corollary}

\theoremstyle{definition}
\newtheorem{definition}[theorem]{Definition}
\newtheorem{remark}[theorem]{Remark}
\newtheorem{example}[theorem]{Example}

\usepackage{titlesec}

\titleformat{\section}
  {\normalfont\large\bfseries}
  {\thesection}
  {0.5em}
  {}

\titleformat{\subsection}
  {\normalfont\normalsize\bfseries}{\thesubsection}{0.5em}{}

\newcommand{\comp}{\mathrm{Comp}}

\title{Signature Varieties of Splines}
\date{}
\author{Carlos Améndola\thanks{TU Berlin, Berlin, Germany} \and Felix Lotter\thanks{MPI MiS, Leipzig, Germany} \and Leonard Schmitz\footnotemark[1]}

\begin{document}

\maketitle

\begin{abstract}
    Splines are central objects for the interpolation of discrete data via piecewise smooth paths. Their iterated-integral signature is an infinite collection of tensors which characterizes paths almost uniquely. We study truncations of this collection, which define algebraic maps from parameter space to tensor space.
    
    We prove that the images of these maps are given by orbits of a matrix-tensor action. Furthermore, taking the Zariski closure, we define and study varieties of spline signature tensors. We determine dimension and degree of these tensor varieties in a number of examples, relying on symbolic computations.

    With a view towards learning, constructing paths with a given signature tensor translates to studying the fibers of the signature map. We use computational methods to determine their cardinality, with a focus on its dependence on different classes of splines. We observe in explicit examples that reconstructing splines from a given signature tensor of a path yields close approximations of the original path.
\end{abstract}

\section{Introduction}

Path signatures were introduced by Chen \cite{chen1954iterated}, playing a key role in rough path theory \cite{lyons2007differential,friz2010multidimensional}. Their applicability in different fields has only grown since then, including finance \cite{Cuchiero2026}, machine learning \cite{Chevyrev2026} and topological data analysis \cite{chevyrev2018persistence}. A concrete connection to algebraic geometry was initiated in \cite{AFS19}, where signature varieties associated to special families of paths were studied; with major interest in piecewise linear and polynomial paths. This viewpoint proved particularly useful when studying the problem of learning paths from their signature tensors \cite{pfeffer2019learning}. 

In this work we generalize the two families above by considering piecewise polynomial paths, as well as smooth versions giving rise to \emph{splines}. Parametric splines are central objects for the interpolation of discrete data, whose applications range from computer-aided geometric design and computer graphics to signal processing, numerical simulation, and data approximation in engineering and physics \cite{ahlberg2016theory,micula2012handbook}.  

We study varieties of signature tensors of piecewise polynomial paths, constructing \textit{dictionaries} and \textit{core tensors} in the sense of \cite{pfeffer2019learning}. We also study dimensions, degrees and equations for the resulting varieties. We perform explicit symbolic computations in \textsc{Macaulay2} \cite{M2} and \texttt{OSCAR} \cite{OSCAR,OSCAR-book}, and numerical computations using \texttt{HomotopyContinuation.jl} \cite{BT18}. In particular, we use the signature packages \cite{amendola2025computing,RS26}. 

\subsection*{Acknowledgements} 
The authors acknowledge funding by the Deutsche Forschungsgemeinschaft (DFG, German Research Foundation)
– CRC/TRR 388 “Rough Analysis, Stochastic Dynamics and Related Fields” – Project A04, 516748464.
\section{Splines and their signature tensors}

All splines are special cases of piecewise polynomial paths. Throughout the paper, we assume that piecewise polynomial paths are given as follows:

\begin{definition}\label{def:pwpolypath} A
   \textit{piecewise polynomial path} is a continuous map $X: [0,1]\to \mathbb R^d$ such that for a suitable $\ell$ and all $0\leq i \leq \ell-1$, the restriction $X|_{[\frac i \ell,\frac {i+1}\ell]}$ is polynomial. We call the points $(\frac i \ell, X(\frac i\ell))$ of the graph of $X$ the \textit{control points}. 
\end{definition}

Note that we restrict the control points of a piecewise polynomial path in this paper to be equidistant in time. We will see later that depending on the type of regularity we impose at the control points, this is possible without loss of generality. An advantage of our assumption is that each piecewise polynomial path can simply be written as a concatenation 
$$X=X^{[1]} \sqcup \ldots \sqcup X^{[\ell]}$$ of polynomial paths $X^{[i]}: [0,\frac1\ell
]\to \mathbb R^d$ defined on the same interval. Recall that the \emph{path concatenation} of $Y: [0,t_1] \to \mathbb R^d$ and $Z: [0,t_2] \to \mathbb R^d$ is the unique map denoted by $Y\sqcup Z:[0,t_1+t_2]\to \mathbb R^d$ that agrees with $Y$ on $[0,t_1]$,  and with $Y(t_1)-Z(0) + Z$ on $[t_1,t_1+t_2]$.

To introduce the signature of a path, we fix a truncation level $k$,  and
consider the truncated tensor algebra $$T^{\leq k}(\mathbb R^d) := \bigoplus_{j=0}^k (\mathbb R^d)^{\otimes j}=\mathbb R\oplus\mathbb R^d\oplus(\mathbb R^d)^{\otimes 2} \oplus\dots\oplus(\mathbb R^{d})^{\otimes k}$$
serving as our $(d^{k+1}-1)/(d-1)$-dimensional ambient space. This space is a direct sum of $j$-tensors on $\mathbb R^d$.
\begin{definition}
    For a piecewise polynomial path $X$, the $k$-truncated \emph{(iterated-integrals) signature} $\sigma^{\leq k}(X)$ is a direct sum
    $$\sigma^{\leq k}(X):=\sigma^{(0)}(X)\oplus\dots\oplus\sigma^{(k)}(X)\in T^{\leq k}(\mathbb R^d)$$
    of \emph{signature tensors} $\sigma^{(j)}(X)\in(\mathbb R^d)^{\otimes j}$ with entries
\begin{align*}
  \label{eq:integrals}
  (\sigma^{(j)}(X))_{i_1\dots i_j}:=\int_0^1\int_0^{t_j}\dots\int_0^{t_2} \dot X_{i_1}(t_1) \dots \dot X_{i_j}(t_j)\,\mathrm dt_1 \dots \mathrm dt_j
\end{align*}
corresponding to
$1\leq i_1,\dots ,i_j\leq d$. 
Here, $\dot X$ denotes the differential of $X$, and we set $\sigma^{(0)}(X):=1$. 
\end{definition}
The signature $\sigma^{\leq k}(X)$ lies in the \textit{free $k$-step nilpotent Lie group} $$\mathcal G^{\leq k}(\mathbb R^2)\subseteq T^{\leq k}(\mathbb R^d),$$ see, for instance, {\cite[Theorem 7.30]{friz2010multidimensional}}. The group multiplication is induced by the tensor product $\otimes$ and compatible with path concatenation, i.e., 
\begin{equation}\label{eq:chen}
    \sigma^{\leq k}(X\sqcup Y)=\sigma^{\leq k} (X)\cdot \sigma^{\leq k}(Y)
\end{equation}
for any $X$ and $Y$. This is known as \emph{Chen's identity} \cite[Theorem 7.11]{friz2010multidimensional}. 

Chen's identity implies the following useful property of piecewise polynomial paths:

\begin{proposition}\label{prop:SignatureIsMorphism}
    The signature of any piecewise polynomial path $X=X^{[1]} \sqcup \ldots \sqcup X^{[\ell]}$ is algebraic in the coefficients of the polynomials defining $X^{[1]},\ldots,X^{[\ell]}$.
\end{proposition}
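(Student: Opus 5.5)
The plan is to reduce the statement to a single polynomial piece via Chen's identity \eqref{eq:chen}, and then check directly that for one polynomial path the signature entries are polynomials in its coefficients. Since the group multiplication on $T^{\leq k}(\mathbb R^d)$ is given by the truncated tensor product, it is polynomial in the entries of both factors. Applying \eqref{eq:chen} $\ell-1$ times gives
\[
\sigma^{\leq k}(X)=\sigma^{\leq k}(X^{[1]})\cdots\sigma^{\leq k}(X^{[\ell]}).
\]
Each entry of $\sigma^{(j)}(X)$ is therefore a polynomial in the entries of the tensors $\sigma^{(m)}(X^{[i]})$ with $m\le j$. Concretely, it is a sum over decompositions $j=m_1+\dots+m_\ell$ of products of entries of $\sigma^{(m_1)}(X^{[1]}),\dots,\sigma^{(m_\ell)}(X^{[\ell]})$.

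It remains to treat one polynomial path $Y=X^{[i]}\colon[0,\tfrac1\ell]\to\mathbb R^d$. Write $Y_r(t)=\sum_{s=0}^{n} a_{r,s}t^s$, so that $\dot Y_r(t)=\sum_{s\ge1} s\,a_{r,s}t^{s-1}$ is linear in the coefficients $a_{r,s}$. The signature of this piece is the iterated integral over $0\le t_1\le\dots\le t_j\le \tfrac1\ell$, by the same definition with the interval rescaled; equivalently, reparametrize to $[0,1]$, which only rescales coefficients by powers of $\ell$. I will expand the integrand $\dot Y_{i_1}(t_1)\cdots\dot Y_{i_j}(t_j)$ by multilinearity. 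This gives a finite sum of monomials $\prod_p s_p a_{i_p,s_p}\,t_1^{s_1-1}\cdots t_j^{s_j-1}$. The integral of each monomial $t_1^{e_1}\cdots t_j^{e_j}$ over the simplex is an explicit rational number times a power of $\tfrac1\ell$, obtained by integrating successively: $\int_0^{t_2}t_1^{e_1}dt_1=t_2^{e_1+1}/(e_1+1)$, and so on. Hence every entry of $\sigma^{(j)}(Y)$ is a homogeneous polynomial of degree $j$ in the coefficients, with rational coefficients.

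Combining the two steps shows that $\sigma^{\leq k}(X)$ is a polynomial map in all the coefficients, and in particular an algebraic morphism. The constant terms $a_{r,0}$ drop out, consistent with translation invariance, and continuity of $X$ imposes only linear constraints on the coefficients, so it does not affect the conclusion. I expect no real obstacle. The only care needed is to justify that the product in $\mathcal G^{\leq k}$ is polynomial, which is immediate because it is the tensor product truncated at level $k$: $(a\cdot b)^{(j)}=\sum_{m=0}^{j}a^{(m)}\otimes b^{(j-m)}$.
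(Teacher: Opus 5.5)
Your proof is correct, but it is organized differently from the paper's. The paper deduces the proposition from \Cref{thm:coretensor}. By \Cref{lem:pwPolyDict}, every piecewise polynomial path is $X=A\circ\mathsf{PwMom}^m$, where $A=(A^{[1]}\ \cdots\ A^{[\ell]})$ collects the coefficients of the pieces, with constant terms dropped and rescaled by powers of $\ell$. Equivariance \eqref{eq:equivariance_sig} then gives $\sigma^{\leq k}(X)=A*C$ for the single fixed core tensor $C=\sigma^{\leq k}(\mathsf{PwMom}^m)$. The congruence action is visibly polynomial in $A$, so no integral ever has to be evaluated.

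You instead factor the signature with Chen's identity \eqref{eq:chen} and integrate monomials over the simplex piece by piece. The two routes are closely linked. Your per-piece computation is equivariance for a single moment curve with its signature evaluated explicitly: your weights $\prod_q s_q/(s_1+\dots+s_q)$ are exactly the closed form in \Cref{prop:closed-form-core}. Your Chen step is what produces the block structure of $C$ on words adapted to $m$.

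What each buys: the paper's packaging gives the structural fact that the whole signature map is one orbit map $A\mapsto A*C$. That is what the later description of $\mathcal S^0_{d,\leq k,m}$ as an orbit closure, and the construction of the spline dictionaries, rely on. Your version is more elementary and self-contained, needing neither the dictionary nor equivariance. It also makes explicit that the coefficients are rational and that level $j$ is homogeneous of degree $j$.

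Your remark on continuity is unnecessary. The concatenation $\sqcup$ is continuous by construction, so the coefficients of $X^{[1]},\dots,X^{[\ell]}$ are unconstrained.
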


We will give a proof in \Cref{sec:dictionarys}.\\

A general piecewise polynomial path can be viewed as a \textit{spline of regularity} $0$. To define splines in higher regularity, we need to introduce smoothness constraints at the control points. What these smoothness constraints should be depends on the context. We will distinguish two kinds of regularity, with terminology inspired by \cite[p. 63-64]{BarskyParametric1989}.

\begin{definition}[Geometric and parametric splines]\label{def:splines}
    Let $$X^{[1]},\ldots,X^{[\ell]}: [0,\tfrac{1}{\ell}] \to \mathbb R^d$$ be polynomial paths. For $r \in \mathbb N$, we call the piecewise polynomial path $X = X^{[1]} \sqcup \ldots \sqcup X^{[\ell]}$ a \textit{geometric spline} of regularity $r$ if, for all $1 \leq i \leq \ell-1$ and $1 \leq s \leq r$,
    \begin{equation}\label{eq:geom-cont}
        \rho_{i,s} \frac{\mathrm dX^{[i]}}{\mathrm dt^s} \left(\frac{1}{\ell}\right) = \frac{\mathrm dX^{[i+1]}}{\mathrm dt^s}(0)
    \end{equation}
    for some $\rho_{i,s} > 0$. We call it a \textit{parametric spline} of regularity $r$ if \eqref{eq:geom-cont} holds with $\rho_{i,s} = 1$ for all $i$ and $s$.
\end{definition}

Note that for $r = 0$, condition \eqref{eq:geom-cont} is empty.

\begin{figure}
    \centering
    \includegraphics[width=0.6\linewidth]{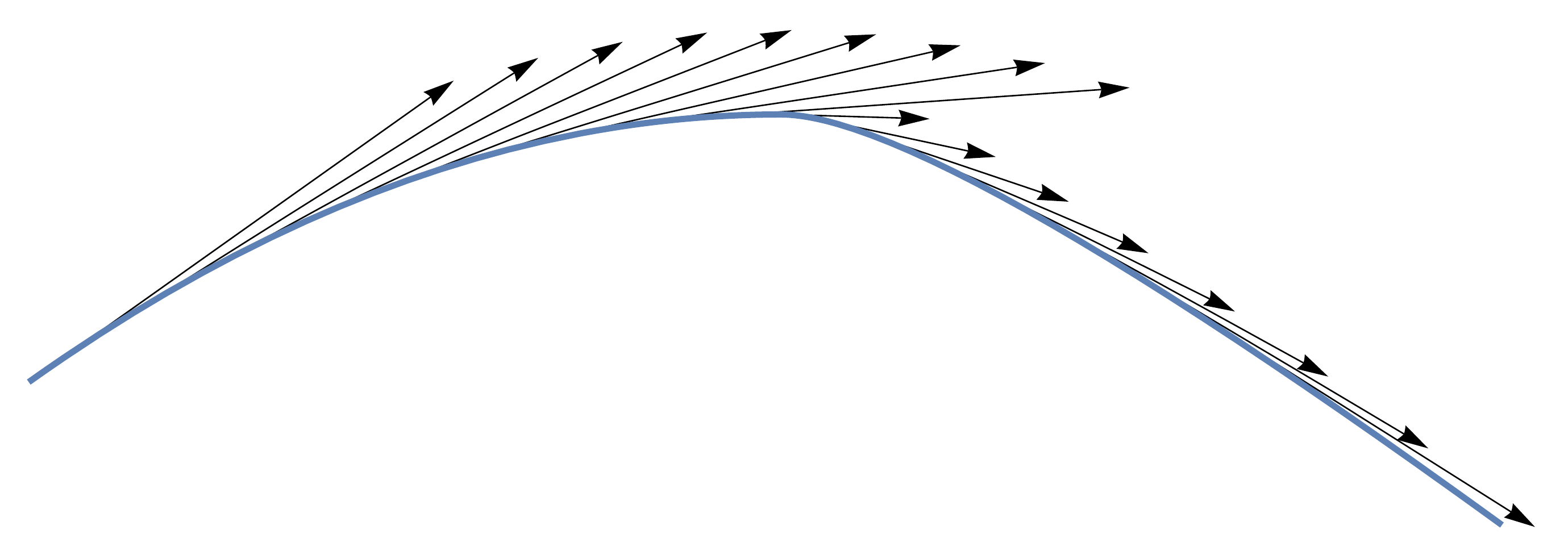}
    \caption{A geometric $(2,2)$-spline of regularity $1$}\label{fig:geom-spline}
\end{figure}

\begin{remark}
    Parametric splines of regularity $r$ are given by those geometric splines whose coordinates are $\mathcal C^r$-functions. This notion puts a lot of emphasis on the parametrization. \Cref{fig:geom-spline} shows a geometric spline which is not parametric: at the control point, the tangent vectors are parallel, but their magnitude is discontinuous.
\end{remark}

\begin{example} The path 
     $(2t,4t) \sqcup (4t,8t)$, with both sub-paths defined on $[0,\frac 1 2]$, is not a parametric spline, even though it is just a reparametrization of the linear path $(3t,6t)$ defined on $[0,1]$. Note that both paths are geometric splines of regularity $1$.
\end{example}

Denote by $\comp_{\geq r}(M)$ the set of compositions of $M\in\mathbb{N}$ with each entry greater or equal to $r$, i.e., all $m=(m_1,\dots,m_\ell)$ with $m_i\geq r$ such that $m_1+\dots+m_\ell=M$. We call $\ell$ the \emph{length} of the composition $m$. If $r=1$ we also write $\comp(M)$. 

\par 

For $m\in\comp_{\geq r}(M)$ we call $X$ a \emph{geometric/parametric $m$-spline of regularity $r$}, if $X$ is a geometric/parametric spline with regularity $r$, and for each $i$ the polynomials defining $X^{[i]}$ in \Cref{def:splines} have degree less or equal $m_i$. 

\begin{example}
    The path $(t,t^2) \sqcup (-t,-t)$ is a geometric (and parametric) $(2,1)$-spline of regularity $0$, but not of any higher regularity. The path $(t,t^2) \sqcup (t, 2t - t^2)$ is a parametric $(2,2)$-spline of regularity $1$. The path $(2t,4t) \sqcup (4t,8t)$ is a geometric $(1,1)$-spline of regularity $1$. Its reparametrization $(3t,6t)$ is a geometric (and parametric) $(1)$-spline of every regularity.
\end{example}

\section{Signature varieties}

Recall that the truncated signature $\sigma^{\leq k}(X)$ of any path $X$ is an element of the free $k$-step nilpotent Lie group $\mathcal G^{\leq k}(\mathbb R^d)$. This is an affine algebraic subvariety of $T^{\leq k}(\mathbb R^d)$, cut out by \textit{shuffle relations} \cite[Theorem 4.10]{AFS19}. \par
By the Chen-Chow theorem \cite[Theorem 7.28]{friz2010multidimensional}, every element of $\mathcal G^{\leq k}(\mathbb R^d)$ can be obtained as the signature $\sigma^{\leq k}(X)$ of some piecewise linear path $X$. In particular, there are no algebraic relations on signatures of general piecewise polynomial paths. However, restricting to subclasses of parametric or geometric splines, we obtain interesting subvarieties of $\mathcal G^{\leq k}(\mathbb R^d)$.

\begin{definition}
    Let $d,k,M\in \mathbb N$,  $r \in \mathbb N$ and $m\in\comp_{\geq r}(M)$. The \textit{signature variety of geometric splines} associated to $d,k,m$ and $r$ is the Zariski closure (over $\mathbb C$), 
    $$\mathcal{S}_{d,\leq k,m}^r := \overline{\{\sigma^{\leq k}(X) \ | \ X \in \mathcal X\}} \subseteq T^{\leq k}(\mathbb C^d),$$
    where $\mathcal X$ is the set of geometric $m$-splines of regularity $r$ in $\mathbb R^d$. Similarly, we define the \textit{signature variety of parametric splines}
    $$\mathcal{P}_{d,\leq k,m}^r \subseteq \mathcal{S}_{d,\leq k,m}^r$$
    by restricting $\mathcal X$ to be the subset of parametric $m$-splines of regularity $r$.
\end{definition}

Note that we define all varieties over $\mathbb C$, as usual in applied algebraic geometry. We will see in \Cref{cor:coretensorforParametricSplines} that the set $\{\sigma^{\leq k}(X) \ | \ X \in \mathcal X\}$ is the image of a polynomial map between real affine spaces; in particular it is a semi-algebraic set by the Tarski-Seidenberg theorem. One should think of signature varieties as algebraic approximations of this set.

\begin{remark}\label{rem:lyndon-coordinates}
Depending on the context, we might want to view $\mathcal S^r_{d,\leq k,m}$ and $\mathcal P^r_{d,\leq k,m}$ as embedded into either $\mathcal G^{\leq k}(\mathbb C^d)$ or $T^{\leq k}(\mathbb C^d)$. The variety $\mathcal G^{\leq k}(\mathbb C^d)$ can in fact be identified with an affine space as well, by choosing free coordinates as follows:

A (nonempty) string over an ordered alphabet is called a \emph{Lyndon word} if it is strictly smaller in lexicographic order than all of its right subwords. For example, $\mathtt{122}$ is a Lyndon word over $\{\mathtt{1},\mathtt{2}\}$, while $\mathtt{212}$ is not, as $\mathtt{212}>\mathtt{12}$. Now for $x \in T^{\leq k}(\mathbb C^d)$ and a Lyndon word $w$ of length $\leq k$ over $\{\mathtt 1,\ldots,\mathtt d\}$, let $x_w$ denote the associated coordinate of $x$. Then the $x_w$ define free coordinates on $T^{\leq k}(\mathbb C^d)$; cf.\ e.g.\ \cite[Theorem 4.10]{AFS19}.
We will take $\mathbb C[x_w \ | \ w \text{ Lyndon of length} \leq k]$ to be the coordinate ring of $\mathcal G^{\leq k}(\mathbb C^d)$, where we grade each generator by the length of the associated word.
\end{remark}

\begin{example}
    The variety $\mathcal P^1_{2,\leq 3,(2,1)}$ is a hypersurface in the $5$-dimensional ambient space $\mathcal G^{\leq 3}(\mathbb R^2)$. Its equation is
    $$960x_{112}x_2 + 960x_{122}x_1 - 612x_{12}^2 - 348 x_{12} x_1 x_2 + 7 x_1^2 x_2^2 = 0.$$
    As an element of the tensor algebra, this relation takes the form 
    $$28 (\texttt{2211} + \texttt{1122}) + 292 (\texttt{1221} + \texttt{2112}) - 320 (\texttt{1212} + \texttt{2121}).$$
\end{example}

\begin{proposition}
    The ideals of $\mathcal{S}_{d,\leq k,m}^r$ and $\mathcal{P}_{d,\leq k,m}^r$ in the graded coordinate ring of $\mathcal G^{\leq k}(\mathbb C^d)$ are homogeneous.
\end{proposition}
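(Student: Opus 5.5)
The plan is to show that both varieties are stable under the scaling action of the torus $\mathbb C^*$ that rescales the level-$j$ component of $T^{\leq k}(\mathbb C^d)$ by $\lambda^j$. In the Lyndon coordinates of \Cref{rem:lyndon-coordinates} this action reads $x_w \mapsto \lambda^{|w|} x_w$, which is exactly the grading of the coordinate ring. An ideal of a variety is homogeneous with respect to a positive grading if and only if the variety is invariant under the corresponding $\mathbb C^*$-action. The standard argument runs as follows. If $f = \sum_e f_e$ is the decomposition into graded pieces and $f$ vanishes on the variety $V$, then $f(\lambda\cdot x) = \sum_e \lambda^e f_e(x)$ vanishes for all $\lambda$ in an infinite set and all $x\in V$. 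A Vandermonde argument then gives $f_e(x)=0$ for each $e$. So it suffices to produce enough scaling symmetry on the level of the signature set.

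The key observation is that the set $\mathcal X$ of geometric (resp.\ parametric) $m$-splines of regularity $r$ is closed under scalar multiplication $X \mapsto \lambda X$ for every real $\lambda \neq 0$. The pieces $\lambda X^{[i]}$ are again polynomial of degree $\leq m_i$. Condition \eqref{eq:geom-cont} is linear in $X$, so it is preserved with the same $\rho_{i,s}$; this includes $\rho_{i,s}=1$ in the parametric case. Continuity and concatenation also commute with scaling. By multilinearity of the iterated integrals, $\sigma^{(j)}(\lambda X) = \lambda^j \sigma^{(j)}(X)$. Hence the signature set $\Sigma=\{\sigma^{\leq k}(X) \mid X\in\mathcal X\}$ satisfies $\lambda\cdot\Sigma\subseteq\Sigma$ for all $\lambda\in\mathbb R^*$.

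Now let $f$ lie in the ideal of $\mathcal{S}^r_{d,\leq k,m}$ (or $\mathcal{P}^r_{d,\leq k,m}$), that is, $f$ vanishes on $\Sigma$. For fixed $x\in\Sigma$, the polynomial $\lambda\mapsto f(\lambda\cdot x)=\sum_e\lambda^e f_e(x)$ vanishes on the infinite set $\mathbb R^*$, so every coefficient $f_e(x)$ is $0$. Thus each $f_e$ vanishes on $\Sigma$, hence on its Zariski closure, so each $f_e$ lies in the ideal. This shows the ideal is homogeneous.

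The only point needing some care is the compatibility of coordinates. The Lyndon coordinates $x_w$ are restrictions of tensor coordinates of degree $|w|$, so the scaling by $\lambda^{|w|}$ matches the grading. Also, working with real $\lambda$ suffices because the argument only needs infinitely many scalars. I expect no real obstacle beyond checking that geometric continuity is preserved. It is, since the condition is homogeneous linear in the derivatives, even for negative $\lambda$.
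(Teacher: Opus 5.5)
Your proposal is correct and follows essentially the same route as the paper: closure of the spline class under $X\mapsto\lambda X$, the scaling $\sigma_w(\lambda X)=\lambda^{|w|}\sigma_w(X)$, and the fact that a polynomial in $\lambda$ vanishing for infinitely many $\lambda$ has zero coefficients, so each graded piece of $f$ vanishes on the signature set and hence on its Zariski closure. Your version is slightly more careful than the paper's in two places: you allow graded pieces of arbitrary degree, where the paper writes $f=f_1+\dots+f_k$, and you check explicitly that the continuity condition is preserved with the same $\rho_{i,s}$.
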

\begin{proof}
    Let $\mathcal X$ be the set of parametric $m$-splines of regularity $r$ in $\mathbb R^d$ and $I$ the ideal of $\mathcal S_{d,\leq k,m}^r$ in $\mathbb C[x_w \ | \ w \text{ Lyndon of length} \leq k]$. If $X \in \mathcal X$, so is the scaled path $\lambda X$ for every $\lambda \in \mathbb R$. By equivariance, $\sigma_w(\lambda X) = \lambda^{\ell(w)}\sigma_w(X)$, where $\ell(w)$ is the length of $w$. Now take $f \in I$ and write $f = f_1 + \ldots + f_k$ as a sum of its homogeneous parts. Let $S:= \sigma^{\leq k}(X)$ for a path $X \in \mathcal X$. The above implies that $\sum_i \lambda^i f_i(S) = 0$ for all $\lambda$. But this forces $f_i(S) = 0$ and thus $f_i \in I$ for all $i$. The same argument applies for geometric $m$-splines.
\end{proof}

\begin{remark}
    By taking the Zariski image of the varieties $\mathcal S^r_{d,\leq k,m}$ under the projection $$T^{\leq k}(\mathbb C^d)\to (\mathbb C^d)^{\otimes k},$$
we obtain varieties $\mathcal S^r_{d,k,m}\subseteq\mathcal P^r_{d,k,m}$ of fixed level signature tensors. In particular, the case $r=0$ includes the signature varieties of piecewise linear and polynomial paths that were introduced in \cite[Equation (37)]{AFS19}. Since the projection above is a finite map, it preserves dimension, see \cite[Theorem 6.1]{AFS19}. The image of the ambient space $\mathcal G^{\leq k}(\mathbb C^d)$ under the projection is the \textit{universal variety} from \cite[Equation 30]{AFS19}.\par
Varieties of truncated signatures  (as opposed to fixed level signatures) were also used in path recovery tasks \cite[Section 6]{amendola2025learning}, to benefit from the Lie group structure defining barycenters.
\end{remark}

Signature varieties of splines define highly structured filtrations of the variety $\mathcal G^{\leq k}(\mathbb R^d)$. From the definition, for every $m \in \comp_{\geq r}(M)$ we have a grid of inclusions:
\[\begin{array}{ccccccccc}
    \mathcal P^r_{d,\leq k,m} & \subseteq & \mathcal P^{r-1}_{d,\leq k,m} & \subseteq & \ldots & \subseteq & \mathcal P^{1}_{d,\leq k,m} &\subseteq &\mathcal P^{0}_{d,\leq k,m} \\
    \rotatebox{-90}{$\subseteq$} & & \rotatebox{-90}{$\subseteq$} & & & & \rotatebox{-90}{$\subseteq$} & & \rotatebox{-90}{$\subseteq$} \\[1em]
    \mathcal S^r_{d,\leq k,m} & \subseteq & \mathcal S^{r-1}_{d,\leq k,m} & \subseteq & \ldots & \subseteq & \mathcal S^{1}_{d,\leq k,m} &\subseteq &\mathcal S^{0}_{d,\leq k,m}
\end{array}\]
Moreover, note that for vectors of natural numbers $m$ and $m'$ we have
\begin{equation}\label{eq:m-incl}
    \mathcal S^r_{d,\leq k,m} \subseteq \mathcal S^r_{d,\leq k,m'}
\end{equation}
if $\ell(m) \leq \ell(m')$ and there are $i_1 < \ldots < i_{\ell_m}$ such that $m_j \leq m'_{i_j}$ for every $1\leq j\leq \ell(m)$.

\begin{proposition}
    The varieties $\mathcal P^r_{d,\leq k,m}$ and $\mathcal S^r_{d,\leq k,m}$ are irreducible. For every $d,k$ and $r$ there is some $N\in \mathbb N$ such that $$\mathcal P^r_{d,\leq k,m} = \mathcal S^r_{d,\leq k,m} = \mathcal G^{\leq k}(\mathbb R^d)$$
    for all $m \in \comp_{\geq r + 1}(N)$ with $\sum_{i=1}^{\ell(m)} m_i \geq N$.
\end{proposition}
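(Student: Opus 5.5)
Irreducibility comes first. The spline sets are images of connected parameter spaces under a polynomial map. For parametric $m$-splines of regularity $r$, the coefficients of $X^{[1]},\ldots,X^{[\ell]}$ range over a real linear subspace $L$ of $\prod_i (\mathbb R[t]_{\leq m_i})^d$, cut out by the linear conditions \eqref{eq:geom-cont} with $\rho_{i,s}=1$. By \Cref{prop:SignatureIsMorphism} (made precise via Chen's identity \eqref{eq:chen}), $\sigma^{\leq k}$ is a polynomial map $L\to T^{\leq k}(\mathbb R^d)$. Since $L$ is Zariski dense in the irreducible variety $L\otimes\mathbb C$, the closure of the image equals the closure of the image of $L\otimes\mathbb C$, which is irreducible. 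So $\mathcal P^r_{d,\leq k,m}$ is irreducible.

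For geometric splines I parametrize by pairs $(\rho,c)$. Here $\rho=(\rho_{i,s})\in\mathbb R_{>0}^{(\ell-1)r}$, and $c$ is a coefficient vector satisfying the conditions \eqref{eq:geom-cont}, which are linear in $c$ for fixed $\rho$. One can solve these conditions explicitly. For each $i\geq 1$, the Taylor coefficients of order $1,\ldots,r$ of $X^{[i+1]}$ at $0$ are determined as $\rho_{i,s}$ times the derivatives of $X^{[i]}$ at $1/\ell$. This uses $m_{i+1}\geq r$. The remaining coefficients are free. This gives a polynomial surjection from $\mathbb R_{>0}^{(\ell-1)r}\times\mathbb R^{N'}$ onto the set of geometric splines. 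The domain is Zariski dense in $\mathbb C^{(\ell-1)r+N'}$, so its image closure $\mathcal S^r_{d,\leq k,m}$ is irreducible as well.

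For the second claim I use Chen–Chow in a form adapted to the regularity. By Chen–Chow, $\mathcal G^{\leq k}(\mathbb R^d)$ is the image of piecewise linear paths with some bounded number $\ell_0$ of pieces; the image of $\ell_0$-piece linear paths stabilizes and has full dimension. The key step is to realize each linear piece by a polynomial segment of degree at least $r+1$ whose first $r$ derivatives vanish at both endpoints. Concretely, take $v\,\phi(t)$ with $\phi(t)=\int_0^{t}\big(s(1/\ell-s)\big)^{r}ds$, normalized. This is a reparametrization of the segment along $v$, so it has the same signature as the straight segment. Concatenating such pieces produces a parametric spline of regularity $r$, since all derivatives up to order $r$ vanish on both sides of each control point.

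The degree of $\phi$ is $2r+1$, which exceeds the bound $r+1$ in the statement. So I would either choose $N$ large enough that compositions with entries $\geq r+1$ and sum $\geq N$ can absorb such pieces, or handle the general case by a dimension count. For the dimension count, I would show that the image of the parametric spline map contains an open set of $\mathcal G^{\leq k}$. To do this, fix the reparametrized-linear configuration and check that the differential of $\sigma^{\leq k}$ is surjective there, using the extra free coefficients. Then irreducibility together with full dimension gives equality. Monotonicity \eqref{eq:m-incl} and the sandwich $\mathcal P\subseteq\mathcal S\subseteq\mathcal G$ then yield the claim for all sufficiently long compositions.

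The main obstacle is this last uniformity. For an arbitrary composition $m$ with entries $\geq r+1$ and sum $\geq N$, I need to guarantee either many pieces or high-degree pieces. A large sum forces one of two things. Either the length is large, giving many pieces of degree $r+1$; I would show that such a piece can emulate a straight segment in a full-rank sense and then appeal to Chen–Chow. Or some $m_i$ is large, and then a single polynomial piece of high degree already has full-dimensional signature image, by the polynomial-path results of \cite{AFS19}. The degree-$(r+1)$ piece case is where the differential computation must be done carefully.
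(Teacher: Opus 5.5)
Your irreducibility argument is correct and matches the paper's: the closure of the image of an irreducible parameter space under a polynomial map is irreducible. Your explicit solution of \eqref{eq:geom-cont} is just the dictionary $B_\rho\circ\mathsf{PwMom}^m$ of \Cref{thm:geom-spline-dict}.

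The second claim, however, is not proved, and the missing piece is the case you flag yourself. For $r\geq 1$, the statement covers compositions such as $m=(r+1,\ldots,r+1)$ of arbitrary length, and for these both of your options fail.
\begin{itemize}
\item No entry is $\geq 2r+1$, so there is nowhere to put your degree-$(2r+1)$ reparametrized segments.
\item The proposed differential computation has no base point. A degree-$(r+1)$ piece $p$ with $p^{(j)}(0)=0$ for $1\le j\le r$ equals $p(0)+ct^{r+1}$, so $p'(1/\ell)\neq 0$ unless $p$ is constant. Hence a ``reparametrized-linear configuration'' with flat junctions does not exist in degree $r+1$, and you name no other point at which to check surjectivity.
\end{itemize}
As written, your argument only covers compositions with at least $\ell_0$ entries $\geq 2r+1$, plus those with one very large entry once the embedding issue below is addressed.

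The paper handles exactly this case with a different tool: the halfshuffle homomorphism $\Phi$ sending a letter $\mathtt i$ to $\mathtt i\mathtt d^r$. Take a piecewise linear $X$ whose $d$-th coordinate is time. Then $Y_i(s)=\sigma(X|_{[0,s]})_{\mathtt i\mathtt d^r}$ is an $r$-fold time integral of $X_i-X_i(0)$, hence a parametric spline of regularity $r$ with pieces of degree $r+1$. The identity $\sigma(Y)_w=\sigma(X)_{\Phi(w)}$, together with injectivity of $\Phi$ and Chen--Chow at level $k(r+1)$, shows that these $Y$ have Zariski dense signatures.

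You can also repair your own construction without any differential computation. Replace your $\phi$ by the normalized antiderivative of a uniform B-spline of degree $r$ with knot spacing $1/\ell$. This function is nondecreasing, $C^r$, of degree $r+1$ on each of $r+1$ consecutive knot intervals, and flat to order $r$ at both ends of its support. So each straight segment becomes $r+1$ consecutive degree-$(r+1)$ pieces with the same signature. Filling the remaining pieces with constants is compatible with the flat junctions, so you do not need \eqref{eq:m-incl}. Chen--Chow then gives the claim for every $m\in\comp_{\geq r+1}$ of length $\geq (r+1)\ell_0$.

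Finally, in the bounded-length case you must say how the long polynomial piece coexists with the regularity constraints at its neighbours. One way: precompose a degree-$N'$ polynomial path (dense by \cite{AFS19}) with your degree-$(2r+1)$ reparametrization, which is flat to order $r$ at both ends, and make all other pieces constant. This works once $m_i\geq (2r+1)N'$, which you can ensure by enlarging $N$.
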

\begin{proof}
    The first statement follows since by \Cref{thm:geom-spline-dict} below both varieties can be described as the image of an irreducible parameter space under a polynomial map.\par
    Since for any $m$ we have $\mathcal P^0_{d,\leq k,(m_i)} \subseteq \mathcal P^r_{d,\leq k,m}$, the second statement certainly holds for all $m$ with length at most some $\ell \in \mathbb N$. By \cite[Theorem 5.6]{AFS19} there is some $N'$ such that $\mathcal P^0_{d,\leq k,(N')} = \mathcal G^{\leq k}(\mathbb R^d)$ and so we can choose $N = \ell \cdot N'$. It remains to show that there is some $L$ with $\mathcal P^r_{d,\leq k, m} = \mathcal G^{\leq k}(\mathbb R^d)$ once $\ell(m)\geq L$.
    
    We define a halfshuffle homomorphism (see e.g.\ \cite[Section 3.1]{colmenarejo2020signatures}) $\phi: T(\mathbb R^d) \to T(\mathbb R^d)$ by mapping a letter $\texttt i$ to $\texttt {id}^r$. We claim that this is an injection. Indeed, assume there is some linear combination $\sum_i \lambda_i w_i$ of words mapping to $0$. Now it is easy to see (e.g.\ by induction) that among the words appearing in $\phi(i_1\ldots i_n)$ there is a unique one where the letters $i_1,\ldots,i_n$ appear at the indices $zr$ for $z \in \mathbb N$, given by the concatenation of words $i_j\texttt d ^r$ for $1 \leq j \leq n$. This implies that already $\sum_i \lambda_i w_i = 0$, proving the claim.
    
    Since it is injective, $\phi$ induces a morphism $\mathcal G^{\leq k\cdot(r+1)}(\mathbb R^d) \to \mathcal G^{\leq k}(\mathbb R^d)$ whose image is Zariski dense. By Chen-Chow \cite[Theorem 7.28]{friz2010multidimensional}, there is some $L'$ such that for every $x \in \mathcal G^{\leq k\cdot(r+1)}(\mathbb R^d)$ there is a piecewise linear path $X$ with $L'$ segments and
    $$x = \sigma^{\leq k\cdot(r+1)}(X).$$
    By \cite[Theorem 3.9]{colmenarejo2020signatures} we have for any word $w$, 
    $$\sigma(X)_{\phi(w)} = \sigma(Y)_{w}$$
    where $Y$ is the path with $Y_i(s) := \sigma(X|_{[0,s]})_{i d\ldots d}$. We can assume by reparametrization that the $d$-th coordinate of $X$ is just given by the time variable $t$. Then $Y$ is a parametric spline of regularity $r$, with $L'$ pieces of degree $r+1$. This shows that the Zariski dense image of $\mathcal G^{\leq k\cdot(r+1)}(\mathbb R^d) \to \mathcal G^{\leq k}(\mathbb R^d)$ is contained in $\mathcal P^r_{d,\leq k,(r+1,\ldots,r+1)}$, so we can choose $L = L'$.
\end{proof}

\begin{proposition}\label{prop:antipode}
For all $m=(m_1,\dots,m_\ell)\in\comp_{\geq r}(M)$, 
\begin{enumerate}[(i)]
\item\label{prop:antipode1}
    $\mathcal{S}^r_{d,\leq k,(m_1,m_2,\dots,m_\ell)}\cong\mathcal{S}^r_{d,\leq k,(m_\ell,\dots,m_2,m_1)}$ and 
    \item\label{prop:antipode2} $\mathcal{P}^r_{d,\leq k,(m_1,\dots,m_\ell)}=\mathcal{P}^r_{d,\leq k,(m_1,\dots,m_i,m_{i+2},\dots,m_\ell)}$ if $m_i\!=\!m_{i+1}\!=\!r$. 
\end{enumerate}

\end{proposition}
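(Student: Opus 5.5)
For (i), I would use time reversal. For a path $X:[0,1]\to\mathbb R^d$, define $\overleftarrow X(t):=X(1-t)$. If $X=X^{[1]}\sqcup\dots\sqcup X^{[\ell]}$ is a geometric $m$-spline of regularity $r$, then $\overleftarrow X=\overleftarrow{X^{[\ell]}}\sqcup\dots\sqcup\overleftarrow{X^{[1]}}$, where each $\overleftarrow{X^{[i]}}(t)=X^{[i]}(\tfrac1\ell-t)$ is still polynomial of degree $\le m_i$.

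I then check the junction conditions \eqref{eq:geom-cont}. Differentiating $s$ times introduces a factor $(-1)^s$ on both sides, and these cancel. The junction condition $\rho_{i,s}\,\partial^sX^{[i]}(\tfrac1\ell)=\partial^sX^{[i+1]}(0)$ therefore becomes
\[
\rho_{i,s}^{-1}\,\partial^s\overleftarrow{X^{[i+1]}}(\tfrac1\ell)=\partial^s\overleftarrow{X^{[i]}}(0),
\]
and $\rho_{i,s}^{-1}>0$. So reversal is a bijection from geometric $(m_1,\dots,m_\ell)$-splines to geometric $(m_\ell,\dots,m_1)$-splines of regularity $r$. It also preserves parametric splines, since $\rho=1$ gives $\rho^{-1}=1$.

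On signatures, Chen's identity \eqref{eq:chen} together with $X\sqcup\overleftarrow X$ being tree-like gives $\sigma^{\le k}(\overleftarrow X)=\sigma^{\le k}(X)^{-1}$. Equivalently, $\sigma^{\le k}(\overleftarrow X)$ is the antipode $w\mapsto(-1)^{|w|}\overleftarrow w$ applied to $\sigma^{\le k}(X)$. This map is a linear involutive automorphism of $T^{\le k}(\mathbb C^d)$, so it maps Zariski closures onto Zariski closures and yields the isomorphism in (i).

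For (ii), suppose $m_i=m_{i+1}=r$ and write $Y=X^{[i]}$, $Z=X^{[i+1]}$. Both have degree $\le r$, and since the spline is parametric, $\partial^sY(\tfrac1\ell)=\partial^sZ(0)$ for $1\le s\le r$. This means $\dot Y(\tfrac1\ell+\cdot)$ and $\dot Z$ are polynomials of degree $\le r-1$ whose Taylor data agree to all orders, so they coincide. Hence $Y\sqcup Z$ is the single polynomial path $Y$ on $[0,\tfrac2\ell]$. The derivative constraints with the neighbouring pieces $X^{[i-1]}$ and $X^{[i+2]}$ are unchanged.

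The main obstacle is that the paper insists on equidistant control points. The spline with $\ell$ pieces has $Y$ occupying a time window of length $2/\ell$. By contrast, a spline with $\ell-2$ pieces of length $1/(\ell-2)$ has no room for $Y$, so I cannot simply delete two pieces. To handle this I would use the reparametrization freedom of signatures. Compress $Y\sqcup Z$ into a single window of length $1/\ell$ by $t\mapsto Y(2t)$, and stretch the others if necessary. Affine reparametrizations change derivatives by positive scalars, so this does not preserve parametric regularity directly.

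The cleaner route is a degree argument. A parametric spline with a block of two degree-$\le r$ pieces glued $C^r$ is the same as a single degree-$\le r$ polynomial on the doubled interval. Because all derivatives of order $\le r$ are continuous at both ends of that block, the whole path is governed by parameters that determine each piece linearly. So I would show that the parametric-spline parameter spaces for the two compositions have the same image under $\sigma^{\le k}$. Concretely, for $r\ge 1$, a $C^r$ piece of degree $\le r$ is forced by the $C^r$ data at its left endpoint: it is the Taylor polynomial of $X^{[i-1]}$ at $\tfrac1\ell$. The same then holds for $X^{[i+1]}$. Consequently the pair $Y,Z$ is determined by $X^{[i-1]}$, and removing it changes only the signature by the factor $\sigma(Y\sqcup Z)$.

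I expect this last point to be the genuinely delicate step. If the reparametrization issue does not resolve, the fallback is to interpret (ii) via the inclusion \eqref{eq:m-incl} in one direction. For the other direction, I would exhibit explicitly that inserting the forced Taylor-continuation block can be absorbed by a time rescaling. Reparametrization-invariance of the signature lets me rescale globally, and uniform rescaling of all pieces preserves the parametric condition up to a common factor. I would verify that this factor can be chosen consistently.
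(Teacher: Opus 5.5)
\textbf{Part (i).} This is fine and follows the paper's argument. Time reversal sends an $(m_1,\dots,m_\ell)$-spline to an $(m_\ell,\dots,m_1)$-spline and acts on signatures by the linear involution $w\mapsto(-1)^{|w|}\overleftarrow w$. Your check that the junction constants become $\rho_{i,s}^{-1}>0$ is a detail the paper leaves implicit.

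\textbf{Part (ii).} Here there is a genuine gap, and none of your fallbacks can close it.

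First, you misread the target. The composition $(m_1,\dots,m_i,m_{i+2},\dots,m_\ell)$ drops only $m_{i+1}$, so the comparison class has $\ell-1$ pieces, not $\ell-2$. Also, ``removing the block changes the signature only by the factor $\sigma(Y\sqcup Z)$'' is not an equality of images.

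The real problem is the one you isolated. After merging, the degree-$\le r$ block occupies a window of length $2/\ell$, while every other piece has length $1/\ell$. An $(\ell-1)$-piece spline, by contrast, has all windows equal. A global affine rescaling keeps this $2:1$ ratio. Rescaling pieces separately multiplies the $s$-th derivatives on the two sides of a knot by different factors, which yields a geometric spline, not a parametric one. The paper's own proof (``coincides, up to reparametrization'') passes over exactly this point.

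In fact the claimed equality is false. Take $r=1$, $d=2$, $k=3$, and compare $m=(2,1,1)$ with $(2,1)$. Write the first piece as $u\mapsto\alpha u+\beta u^2$ with $u\in[0,1]$. The $C^1$ condition then forces the final straight part to be $2(\alpha+2\beta)$ for $(2,1,1)$, but $\alpha+2\beta$ for $(2,1)$. This ratio is an affine invariant of the underlying curve.

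Already $\sigma^{\le 3}$ detects it. The $(2,1,1)$-core, namely $(u,u^2)$ followed by the segment $(2,4)$, has Lyndon coordinates
$(x_1,x_2,x_{12},x_{112},x_{122})=(3,5,\tfrac{26}{3},\tfrac{107}{12},\tfrac{244}{15})$.
At this point the paper's quartic for $\mathcal P^1_{2,\le3,(2,1)}$ takes the value $15$. It vanishes at the $(2,1)$-core $(2,3,\tfrac{11}{3},\tfrac{31}{12},\tfrac{64}{15})$. That quartic has bidegree $(2,2)$ and is fixed by both elementary shears, so it is a relative $\mathrm{GL}_2$-invariant and vanishes on all of $\mathcal P^1_{2,\le3,(2,1)}$. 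Hence $\mathcal P^1_{2,\le3,(2,1,1)}\neq\mathcal P^1_{2,\le3,(2,1)}$.

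Statement (ii) holds only if the knot vector is allowed to change with the composition, so that the merged block keeps its double-length window. That lies outside the paper's equidistant convention. So the ``consistent rescaling factor'' you planned to exhibit does not exist.
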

\begin{proof}
 For every $m$-spline $X$ we have 
    $$\sigma^{\leq k}(X)_{i_1\dots i_k}=(-1)^k \sigma^{\leq k}(\accentset\leftharpoonup X)_{i_k\dots i_1}$$
    where $\accentset\leftharpoonup X$ is the $(m_\ell,\dots,m_1)$-spline that results from $X$ after time reversal; see \cite[Proposition 7.12]{friz2010multidimensional}. 
    Part \eqref{prop:antipode1} therefore follows by permuting the signature entries accordingly. 
    
    For part \eqref{prop:antipode2} we observe that the space of parametric $m$-splines coincides, up to reparametrization, with the space of $(m_1,\dots,m_i,m_{i+2},\dots,m_\ell)$-splines whenever $m_i=m_{i+1}=r$. Here we use that a piecewise polynomial path with (parametric) regularity $r$ and polynomial parts of degrees $r$ is already polynomial itself. The signature is invariant under reparametrization; see \cite[Proposition 7.10]{friz2010multidimensional}. Therefore, the images of parametric splines, and in particular their Zariski closures with respect to $m$ of length $\ell$ and $(m_1,\dots,m_i,m_{i+2},\dots,m_\ell)$ of length $\ell-1$ coincide. 
\end{proof}

\begin{remark}
    Property \eqref{prop:antipode2} in \Cref{prop:antipode} does not hold for geometric splines, cf.\ \Cref{tab:tensor-var-d3k3}: the variety $\mathcal S^2_{3,\leq 3,(2,2)}$ has dimension $7$ but $\mathcal S^2_{3,\leq 3,(2)} = \mathcal S^0_{3,\leq 3,(2)}$ is $6$-dimensional, see \cite[Table 3]{AFS19}.
\end{remark}

\section{Dictionaries and core tensors}\label{sec:dictionarys}

In \cite{pfeffer2019learning}, the notion of a \textit{dictionary} for a class of paths in $\mathbb R^d$ was introduced. This is a fixed path $\psi$ in a (possibly higher or lower-dimensional space) $\mathbb R^M$ such that any path in the class can be written as a linear transform of $\psi$, 
$$A \circ \psi:[0,1]\rightarrow\mathbb{R}^d,t\mapsto A \cdot \psi(t)$$ 
 where $A\in\mathbb R^{d\times M}$. The idea is that by equivariance of the signature, we can then express all signature tensors of paths in the class as images of the \textit{core tensors} $\sigma^{(k)}(\psi)$ under a matrix action, see \eqref{eq:equivariance_sig} below.\par
In this section, we prove that parametric $m$-splines of given regularity $r\geq 0$ admit a dictionary. More generally, we will see that geometric $m$-splines admit an \textit{algebraically parametrized} dictionary.\par
For a detailed introduction into dictionaries and core tensors we refer the reader to \cite[Section 2]{pfeffer2019learning}.

\begin{example}
    The \emph{moment curve} $\mathsf{Mom}:[0,1]\rightarrow \mathbb{R}^M$,  given by
    $$\mathsf{Mom}^M(t):=(t,t^2,\dots,t^{M})$$
    for all $t\in[0,1]$, is a dictionary for polynomial paths of degree $M$.
\end{example}

We recall the important notion of a \emph{congruence transform} on tensors.  
For a tensor $C\in(\mathbb{R}^M)^{\otimes k}$
 and matrix $A\in{\mathbb{R}}^{d\times M}$, the \emph{matrix-tensor congruence transform} $A*C\in(\mathbb{R}^{d})^{\otimes k}$ is a $k$-tensor with entries
\begin{equation}\label{eq:def_tensorMatrixCongruence}\left(A*C\right)_{i_1\dots i_k}:=\sum_{\alpha_1=1}^M\dots\sum_{\alpha_k=1}^MA_{i_1\alpha_1}\dots A_{i_k\alpha_k}C_{\alpha_1\dots\alpha_k}\end{equation}
for every $1\leq i_1,\dots,i_k\leq d$. Whenever $M$ is significantly smaller than $d$, the factorization $A* C$ is known as the \emph{Tucker format}. The matrix-tensor congruence defines an action on tensors. It naturally extends to sequences of tensors 
$$*:\mathbb{R}^{d\times M}\times T^{\leq k}(\mathbb R^M)\rightarrow T^{\leq k}(\mathbb R^d)$$
where we apply the action \eqref{eq:def_tensorMatrixCongruence} to every graded component simultaneously. 
Since integration is linear, it is easy to see (e.g. \cite[Lemma 2.1]{pfeffer2019learning}) that the signature is equivariant, i.e.,  
\begin{equation}\label{eq:equivariance_sig}
\sigma^{\leq k}(A\circ X)=A*\sigma^{\leq k}(X),
\end{equation}
for all $X:[0,1]\rightarrow\mathbb R^M$ and $A\in\mathbb R^{d\times M}$.\\

Let us now define a dictionary for the class of $m$-splines of regularity $0$. This yields a core tensor whose matrix congruence orbit will describe the varieties $\mathcal S^0_{d,\leq k,m}$ for arbitrary $d$. 

\begin{definition}\label{def:dictionaryPwPoly}
For every $m\in\comp(M)$ of length $\ell$ we define the \emph{piecewise monomial paths}
\begin{equation*}
\mathsf{PwMom}^{m}:=\begin{pmatrix}
\mathsf{Mom}^{m_1}\\
0_{m_{2}+\dots+m_{\ell}}
\end{pmatrix}\sqcup
\begin{pmatrix}0_{m_1}\\
\mathsf{Mom}^{m_2}\\
0_{m_{3}+\dots+m_{\ell}}
\end{pmatrix}
\sqcup\dots \sqcup 
\begin{pmatrix}0_{m_1+\dots+m_{\ell-1}}\\
\mathsf{Mom}^{m_\ell}
\end{pmatrix}
\end{equation*}
via path compositions of $\ell$ embedded moment curves 
\begin{equation}\label{eq:defEmbMom}
\begin{pmatrix}0_{m_1+\dots+m_{i-1}}\\
\mathsf{Mom}^{m_i}\\
0_{m_{i+1}+\dots+m_{\ell}}
\end{pmatrix}:[0,1]\rightarrow\mathbb{R}^M
\end{equation}
into $M$-dimensional space for $1\leq i\leq \ell$, reparametrized linearly to a path
$$\mathsf{PwMom}^{m}: [0,1] \to \mathbb R^{M}.$$
\end{definition}

In 
\Cref{fig:axis3_compositions} we illustrate several piecewise monomial paths according to \Cref{def:dictionaryPwPoly}. This refines the well-known dictionaries for piecewise linear paths $m=(1,1,1)$ and polynomial paths $m=(3)$ that are used in \cite{pfeffer2019learning}. 

\begin{figure}[h]
    \centering
    \begin{subfigure}{0.24\columnwidth}
        \centering
\includegraphics[width=\linewidth]{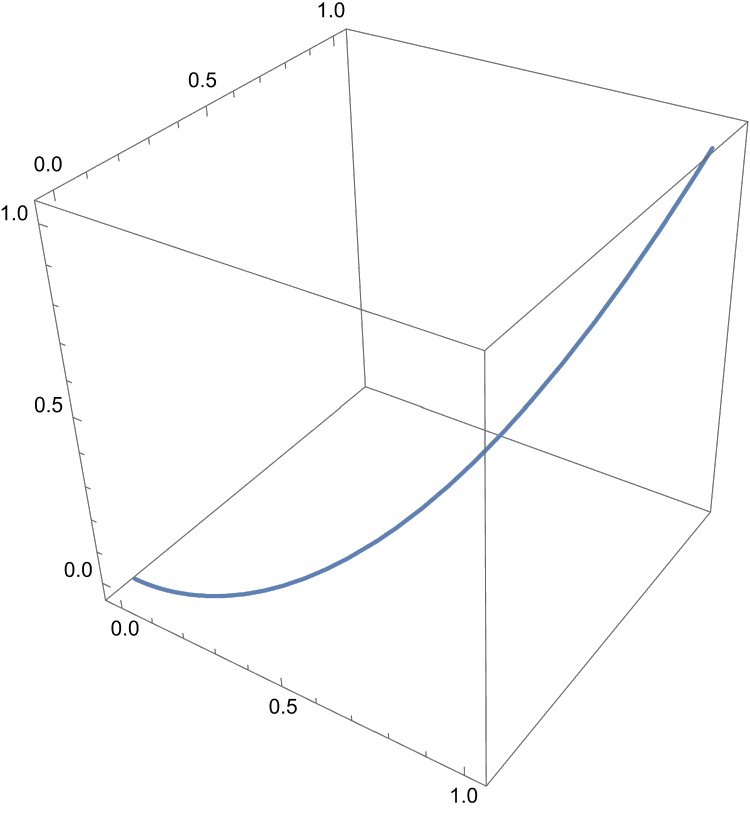}
\caption*{ 
$m=(3)$}
\end{subfigure}
\hfill
        \begin{subfigure}{0.24\columnwidth}
        \centering
\includegraphics[width=\linewidth]{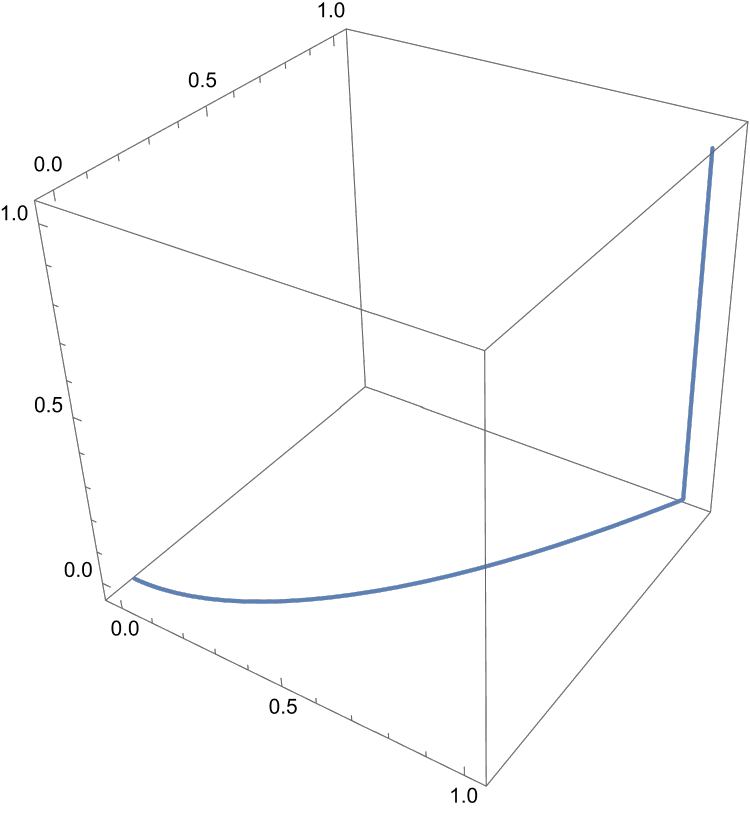}
\caption*{$m=(2,1)$}
\end{subfigure}
\hfill
\begin{subfigure}{0.24\columnwidth}
        \centering
\includegraphics[width=\linewidth]{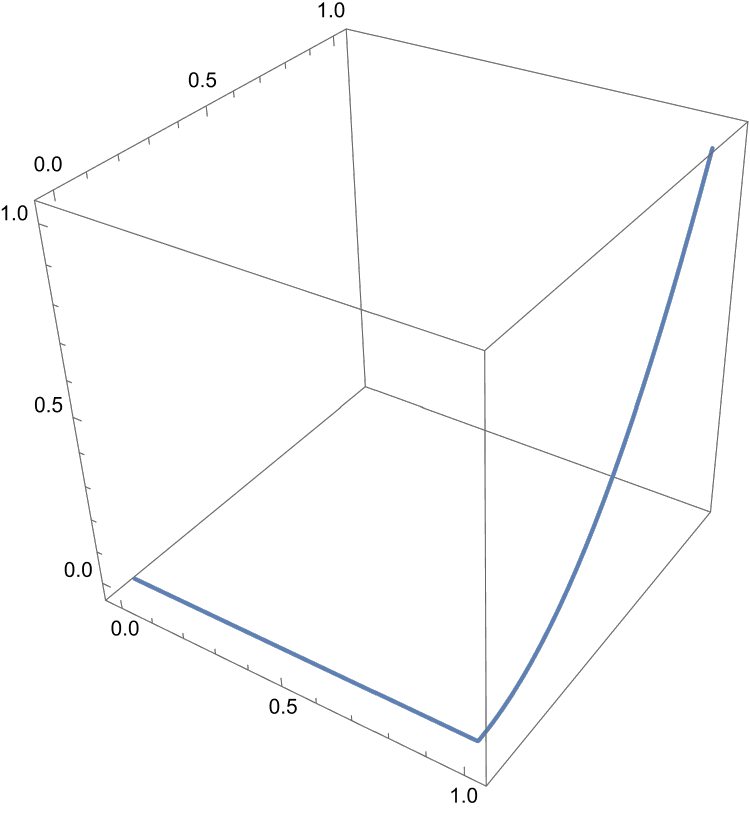}
\caption*{$m=(1,2)$}
\end{subfigure}
\hfill
\begin{subfigure}{0.24\columnwidth}
        \centering
\includegraphics[width=\linewidth]{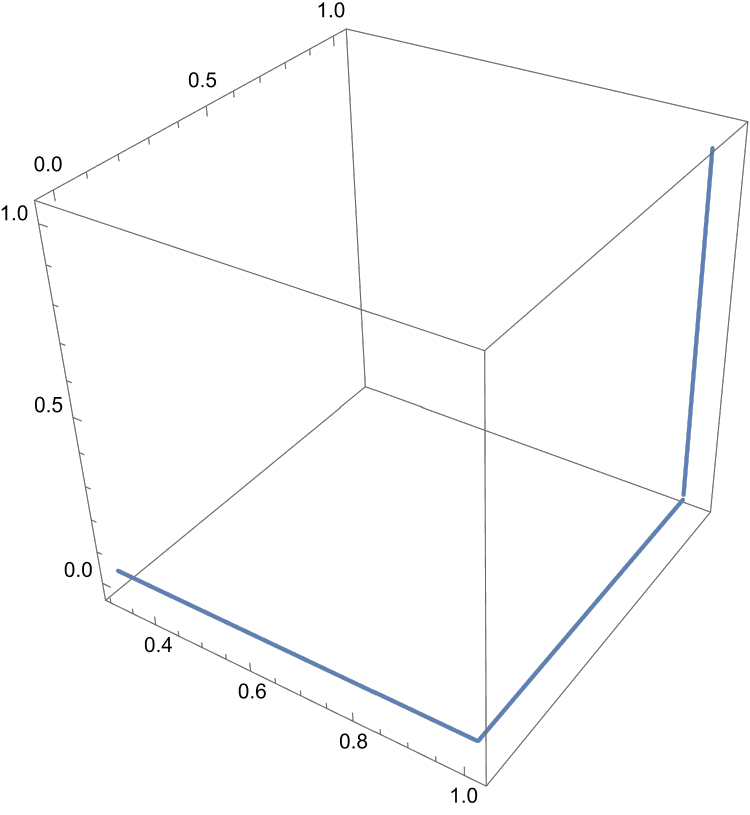}
\caption*{$m=(1,1,1)$}
\end{subfigure}
\caption{Images of $\mathsf{PwMom}^{m}$ for $m\in\comp(3)$ }
\label{fig:axis3_compositions}
\end{figure}

\begin{lemma}\label{lem:pwPolyDict}
Let $m\in \comp(M)$. The path $\mathsf{PwMom}^m$ forms a dictionary for $m$-splines of regularity $0$. 
\end{lemma}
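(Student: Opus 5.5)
We must show that every $m$-spline $X$ of regularity $0$ in $\mathbb R^d$ can be written as $A\circ \mathsf{PwMom}^m$ for a suitable $A\in\mathbb R^{d\times M}$. We may assume $X(0)=0$, since signatures only see increments; the dictionary convention is understood up to translation. The plan is to build $A$ block by block, one block for each piece of $X$. We then check that the linear image of the concatenation is the concatenation of the linear images, and that this matches $X$ including the linear reparametrization.

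First we fix the parametrizations. The path $\mathsf{PwMom}^m$ is the concatenation of the $\ell$ embedded moment curves \eqref{eq:defEmbMom}, each defined on $[0,1]$, rescaled linearly so that the $i$-th one runs over $[\frac{i-1}{\ell},\frac i\ell]$. Equivalently, $\mathsf{PwMom}^m=\Psi^{[1]}\sqcup\dots\sqcup\Psi^{[\ell]}$ with $\Psi^{[i]}:[0,\frac1\ell]\to\mathbb R^M$, $\Psi^{[i]}(t)=(0,\mathsf{Mom}^{m_i}(\ell t),0)$. Each piece $X^{[i]}:[0,\frac1\ell]\to\mathbb R^d$ of $X$ has polynomial coordinates of degree $\le m_i$. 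After translating so that $X^{[i]}(0)=0$, we write
$$X^{[i]}(t)=\sum_{j=1}^{m_i} a^{(i)}_j (\ell t)^j$$
with vectors $a^{(i)}_j\in\mathbb R^d$. This is possible because $t\mapsto \ell t$ is an invertible linear substitution, so the monomials $(\ell t)^j$ for $j=1,\dots,m_i$ still span the polynomials of degree $\le m_i$ with zero constant term. Let $A_i\in\mathbb R^{d\times m_i}$ have columns $a^{(i)}_1,\dots,a^{(i)}_{m_i}$, and set $A=(A_1\mid\cdots\mid A_\ell)\in\mathbb R^{d\times M}$. Then $A\circ\Psi^{[i]}=A_i\circ\mathsf{Mom}^{m_i}(\ell\,\cdot)=X^{[i]}-X^{[i]}(0)$, because the zero blocks of $\Psi^{[i]}$ kill the other columns of $A$.

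It remains to see that $A\circ(Y\sqcup Z)=(A\circ Y)\sqcup(A\circ Z)$ for linear $A$. By the definition of concatenation, the second part equals $Y(t_1)-Z(0)+Z$, and $A$ applied to it gives $AY(t_1)-AZ(0)+AZ$. This is exactly how the concatenation of the images is defined. Iterating over the $\ell$ pieces gives $A\circ\mathsf{PwMom}^m=X^{[1]}\sqcup\dots\sqcup X^{[\ell]}=X$, up to the initial translation. Conversely, every $A\circ\mathsf{PwMom}^m$ is an $m$-spline of regularity $0$: it is continuous, and on the $i$-th interval it is polynomial of degree $\le m_i$. So the class of such splines is exactly the image of the dictionary.

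The only delicate point is the bookkeeping: concatenating paths all defined on $[0,1]$ in \Cref{def:dictionaryPwPoly} and then rescaling must agree with the convention of \Cref{def:pwpolypath}, where pieces live on $[0,\frac1\ell]$. This is settled by the substitution $s=\ell t$ in the monomial basis above. Alternatively, one can invoke reparametrization invariance of the signature if only signatures matter.
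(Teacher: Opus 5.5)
Your proof is correct and follows the same route as the paper. Both arguments write each piece as $X^{[i]}=A^{[i]}\circ\mathsf{Mom}^{m_i}$, up to translation and the rescaling $t\mapsto \ell t$, and then take the block matrix $\bigl(A^{[1]}\ \cdots\ A^{[\ell]}\bigr)$; you simply make explicit what the paper calls ``easy to see'', namely the initial translation, the linear reparametrization, the compatibility $A\circ(Y\sqcup Z)=(A\circ Y)\sqcup(A\circ Z)$, and the converse inclusion.
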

\begin{proof}
We see that $m$-splines $X$ of regularity $0$ are precisely the concatenations of $\ell$ polynomial paths
$$X^{[1]} \sqcup \dots \sqcup X^{[\ell]}$$
such that $X^{[i]}$ has degree $m_i$. In particular, $X^{[i]} = A^{[i]}\circ \mathsf{Mom}^{m_i}$ . It is then easy to see that
\begin{align*}
    X = \begin{pmatrix} A^{[1]} & \dots & A^{[\ell]} \end{pmatrix} \circ \mathsf{PwMom}^m. \tag*{\qedhere}
\end{align*}
\end{proof}

\begin{theorem}\label{thm:coretensor}
For every $d,k\geq 2$ and $m\in\comp(M)$ the variety $\mathcal{S}_{d,\leq k,m}^0$ is described by the (sequence of) core tensors 
$$C:=\sigma^{\leq k}(\mathsf{PwMom}^m)\subseteq T^{\leq k}(\mathbb R^M),$$ 
that is, 
$\mathcal{S}_{d,\leq k,m}^0=\overline{\left\{A*C\mid A\in\mathbb{C}^{d\times M}\right\}}$. 
\end{theorem}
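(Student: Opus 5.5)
The plan is to combine \Cref{lem:pwPolyDict} with the equivariance \eqref{eq:equivariance_sig} of the signature to identify the real image set exactly, and then pass to Zariski closures over $\mathbb C$. Write $\mathcal X$ for the set of (real) $m$-splines of regularity $0$ in $\mathbb R^d$. The key claim is the set-theoretic identity
$$\{\sigma^{\leq k}(X)\mid X\in\mathcal X\}=\{A*C\mid A\in\mathbb R^{d\times M}\}.$$
For ``$\subseteq$'', \Cref{lem:pwPolyDict} writes any $X\in\mathcal X$ as $A\circ\mathsf{PwMom}^m$, and \eqref{eq:equivariance_sig} gives $\sigma^{\leq k}(X)=A*C$. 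For ``$\supseteq$'', given any $A=(A^{[1]}\ \cdots\ A^{[\ell]})\in\mathbb R^{d\times M}$, the path $A\circ\mathsf{PwMom}^m$ is a concatenation of the polynomial pieces $A^{[i]}\circ\mathsf{Mom}^{m_i}$ (suitably reparametrized linearly to $[0,\frac1\ell]$). Each piece has degree at most $m_i$, and the path is continuous, so it lies in $\mathcal X$. I will check carefully that the linear reparametrization in \Cref{def:dictionaryPwPoly} matches the convention that each piece lives on $[0,\frac1\ell]$. This is harmless in any case, since rescaling $t\mapsto \ell t$ preserves polynomial degree and the signature is reparametrization invariant.

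Taking Zariski closures then gives $\mathcal S^0_{d,\leq k,m}=\overline{\{A*C\mid A\in\mathbb R^{d\times M}\}}$. It remains to replace $\mathbb R^{d\times M}$ by $\mathbb C^{d\times M}$. The map $\Phi:A\mapsto A*C$ is polynomial in the entries of $A$, since each graded component is given by \eqref{eq:def_tensorMatrixCongruence} with $C$ fixed. Since $\mathbb R^{d\times M}$ is Zariski dense in $\mathbb C^{d\times M}$ and $\Phi$ is continuous in the Zariski topology, we get
$$\Phi(\mathbb C^{d\times M})\subseteq\Phi\bigl(\overline{\mathbb R^{d\times M}}\bigr)\subseteq\overline{\Phi(\mathbb R^{d\times M})}.$$
Together with the trivial reverse inclusion, this shows that both closures coincide.

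The main (mild) obstacle is the ``$\supseteq$'' direction, namely verifying that every linear image of the dictionary genuinely belongs to the spline class with the equidistant time convention. This is where the concatenation convention $Y(t_1)-Z(0)+Z$ matters. Since each embedded moment curve starts at $0$, the image $A\circ\mathsf{PwMom}^m$ is exactly the concatenation of the pieces $A^{[i]}\circ\mathsf{Mom}^{m_i}$, with no extra constraints. The density argument is routine. As a byproduct, the argument also proves \Cref{prop:SignatureIsMorphism} in this case, since $A*C$ is polynomial in the coefficients $A$.
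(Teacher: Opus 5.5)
Your proposal is correct and follows the paper's proof. The paper likewise combines \Cref{lem:pwPolyDict} with equivariance \eqref{eq:equivariance_sig} to identify the real image with $\mathbb R^{d\times M}*C$ and then takes Zariski closures; you additionally spell out the reverse inclusion and the density argument giving $\overline{\mathbb R^{d\times M}*C}=\overline{\mathbb C^{d\times M}*C}$, both of which the paper leaves implicit.
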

\begin{proof}
    With \Cref{lem:pwPolyDict} we know that $\mathsf{PwMom}^m$ is a dictionary for $m$-splines of regularity $0$, so every spline $X:[0,1]\rightarrow \mathbb R^d$ can be written as $X=A*\mathsf{PwMom}^m$ with $A\in\mathbb R^{d\times M}$. With equivariance \eqref{eq:equivariance_sig} we see that the image of $\sigma^{\leq k}$ is given by $\mathbb R^{d\times M}*C$, so the claim follows by taking Zariski closures over $\mathbb C$. 
\end{proof}

Take $m = (m_1,\ldots,m_l) \in \comp(M)$ and let us write $\alpha_1 \sqcup \ldots \sqcup \alpha_\ell = [M]$ for the associated partition. Let us give a closed form description of the core signature $\sigma^{\leq k}(\mathsf{PwMom}^m)$. We call a word $w$ \textit{adapted} to $m$, if we can write $w = w_1\ldots w_\ell$ such that $w_i$ is a word in the alphabet $[\alpha_i]$ for every $i$. Here we also allow the empty word. Note that this decomposition is unique. By subtracting $\sum_{j=1}^{i-1} m_j$ from all letters in $w_i$, $w$ determines words $v_i$ over the alphabet $[m_i]$ for every $1\leq i \leq \ell$. Let us write $v_i = v_{i1} \ldots v_{is_i}$, where the $v_{ij}$ are the letters of $v_i$.

\begin{proposition}\label{prop:closed-form-core}
    Let $w$ be a word. If $w$ is not adapted to $m$, then $\sigma(\mathsf{PwMom}^m)_{w} = 0$. Otherwise, we have $$\sigma(\mathsf{PwMom}^m)_{w} = \prod_{i=1}^\ell \prod_{j=1}^{s_i} \frac{v_{ij}}{\sum_{u=1}^j v_{iu}}$$
    where the $v_{ij}$ are defined as above.
\end{proposition}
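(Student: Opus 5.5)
The plan is to combine Chen's identity \eqref{eq:chen} with the known closed form for the signature of a single moment curve. Since the signature is invariant under reparametrization, we may regard $\mathsf{PwMom}^m$ as the concatenation $Y^{[1]}\sqcup\dots\sqcup Y^{[\ell]}$ of the embedded moment curves \eqref{eq:defEmbMom}, each on $[0,1]$. By \eqref{eq:chen},
$$\sigma(\mathsf{PwMom}^m)=\sigma(Y^{[1]})\cdot\ldots\cdot\sigma(Y^{[\ell]}),$$
and the product in the tensor algebra gives, for a word $w$,
$$\sigma(\mathsf{PwMom}^m)_w=\sum_{w=u_1\cdots u_\ell}\ \prod_{i=1}^\ell \sigma(Y^{[i]})_{u_i},$$
summed over all deconcatenations of $w$ into $\ell$ (possibly empty) factors.

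The next step is to analyse the support. The path $Y^{[i]}$ has nonzero derivative only in the coordinates of the block $\alpha_i$. Any signature entry indexed by a word containing a letter outside $\alpha_i$ therefore vanishes, because that integrand contains a factor $\dot Y^{[i]}_j\equiv 0$. A summand is thus nonzero only if each $u_i$ is a word over $\alpha_i$. The blocks $\alpha_1,\dots,\alpha_\ell$ are consecutive disjoint intervals in increasing order, so such a factorization exists only if $w$ is adapted to $m$, and then it is the unique one $u_i=w_i$. This proves the vanishing statement. In the adapted case the sum collapses to $\prod_i\sigma(Y^{[i]})_{w_i}$. Since $Y^{[i]}$ is $\mathsf{Mom}^{m_i}$ placed in the coordinates $\alpha_i$, we get $\sigma(Y^{[i]})_{w_i}=\sigma(\mathsf{Mom}^{m_i})_{v_i}$ after shifting the letters.

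It remains to compute $\sigma(\mathsf{Mom}^n)_{v}$ for a word $v=v_1\cdots v_s$ over $[n]$. Here $\dot X_{a}(t)=a t^{a-1}$. I will show by induction on $j$ that the inner iterated integral up to level $j$, as a function of its upper limit $t$, equals
$$\Bigl(\prod_{q=1}^{j}\frac{v_q}{\sum_{u=1}^{q}v_u}\Bigr)\,t^{v_1+\dots+v_j}.$$
The base case $j=1$ is $\int_0^t v_1 s^{v_1-1}\,ds=t^{v_1}$. For the inductive step, multiplying by $v_{j+1}s^{v_{j+1}-1}$ and integrating gives
$$\frac{v_{j+1}}{v_1+\dots+v_{j+1}}\,t^{v_1+\dots+v_{j+1}}.$$
Evaluating at $t=1$ yields the factor $\prod_j v_{ij}/\sum_{u\le j}v_{iu}$, and the empty word gives $1$, consistent with the empty product.

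The main point needing care is the bookkeeping in the middle step. We have to confirm that reparametrizing each piece to $[0,1]$ is harmless, which holds by invariance of the signature under reparametrization. We also have to confirm that the ordering of blocks makes the factorization unique, including when some $w_i$ are empty. The integral computation itself is routine.
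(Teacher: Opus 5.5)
Your proof is correct and takes essentially the same approach as the paper, whose proof simply invokes Chen's identity together with the closed form for moment-curve signatures from \cite[Example 2.2]{AFS19}. The only differences are that you derive that closed form by direct iterated integration instead of citing it, and you spell out the support and uniqueness bookkeeping that the paper leaves implicit.
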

\begin{proof}
    This follows from Chen's identity and \cite[Example 2.2]{AFS19}. 
\end{proof}

\begin{example}
For truncation $k=3$ and $m=(2,1)\in\comp(3)$ we illustrate the core tensor  $C=1\oplus C^{(1)}\oplus C^{(2)}\oplus C^{(3)}$ from \Cref{thm:coretensor} given by  
$$C^{(1)}=\begin{pmatrix}1\\1\\1\end{pmatrix},
\quad
C^{(2)}=\frac16\begin{pmatrix}
3&4&6\\
4&3&6\\
0&0&3
\end{pmatrix}
$$
and $3$-tensor 
$$
C^{(3)}=\frac1{60}\left(
\begin{array}{ccccccccccc}10 & 10 & 0 & \vrule & 15 & 16 & 0 & \vrule & 30 & 40 & 30 \\5 & 6 & 0 & \vrule & 8 & 10 & 0 & \vrule & 20 & 30 & 30 \\0 & 0 & 0 & \vrule & 0 & 0 & 0 & \vrule & 0 & 0 & 10\end{array}\right)
$$
in first matrix-tensor folding.

Let us verify the closed formula above for the entries of $C^{(2)}$. For $m=(2,1)$, the words not adapted to $m$ are precisely $\texttt{31}$ and $\texttt{32}$, explaining $C^{(2)}_{31} = C^{(2)}_{32} = 0$. For all other two-letter words $\texttt{ij}$, we have to find $v_1$ and $v_2$ from \Cref{prop:closed-form-core}. There are three possibilities: either $i,j\leq 2$, or $i\leq 2$ and $j = 3$, or $i,j = 3$. In the first case $v_2$ is the empty word, in the second case $v_2 = \texttt{1}$, and in the third case $v_2 = \texttt{11}$. From the first case, we obtain \begin{align*}
C^{(2)}_{11} = \frac{1}{1} \cdot \frac{1}{2}, \quad\quad
C^{(2)}_{12} =\frac{1}{1}\cdot \frac{2}{3}, \\C^{(2)}_{21} =\frac{2}{2} \cdot \frac{2}{3},\quad\quad
C^{(2)}_{22} =\frac{2}{2}\cdot \frac{2}{4}.
\end{align*}
From the second case we get $C^{(2)}_{13} = C^{(2)}_{23} =\frac{1}{1} \cdot \frac{1}{1}$. Finally, we obtain $C^{(2)}_{33} =\frac{1}{1} \cdot \frac{1}{2}$ from the third case.

\end{example}

\begin{proof}
    [Proof of \Cref{prop:SignatureIsMorphism}]
    For a fixed tensor $C$ and matrix $A$, the entries of the tensor $A * C$ are polynomials in the entries of $A$; so the statement follows by \Cref{thm:coretensor}.
\end{proof}

The natural next question is whether we can find core tensors for geometric and parametric $m$-splines of regularity $r>0$. Note that for fixed $\rho_{i,r}$, \eqref{eq:geom-cont} is a linear constraint on the columns of the parameter matrix. This will allow us to derive a dictionary for those geometric splines that satisfy \eqref{eq:geom-cont} with this specific choice of $\rho_{i,j}$, and in particular classes of parametric splines. To describe geometric splines, we introduce a generalization of dictionaries in \cite{pfeffer2019learning}. 
\begin{definition}
    Let $\mathcal X$ be some class of paths in $\mathbb R^d$. A \textit{parametrized dictionary} for $\mathcal X$ is a family of paths
    $$(\Psi_\rho: [0,1] \to \mathbb R^N)_{\rho \in \mathbb R^K}$$
    for some $N,K \in \mathbb N$ such that the paths in $\mathcal X$ are precisely the paths $A\Psi_\rho$ for $A \in \mathbb R^{d \times N}$ and $\rho$ in a Zariski dense subset of $\mathbb R^K$. We call such a dictionary \textit{algebraically parametrized} if the map $\rho \mapsto \sigma^{\leq k}(\Psi_\rho)$ is given by polynomials for all $k$.
\end{definition}

Instead of one core tensor, an algebraically parametrized dictionary defines a family of core tensors, and the Zariski closure of this family gives a \textit{core variety}. The signature variety is then the closure of the matrix orbit of the core variety.

Let $r, M\in \mathbb N$ with $M \geq r$ and let $m \in \comp_{\geq r}(M)$. We can now state the main result of this section. 
\begin{theorem}\label{thm:geom-spline-dict}
    The class of geometric $m$-splines of regularity $r$ admits an algebraically parametrized dictionary.
\end{theorem}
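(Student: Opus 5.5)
Starting from the dictionary $\mathsf{PwMom}^m$ of \Cref{lem:pwPolyDict}, we fix the parameters $\rho=(\rho_{i,s})_{1\le i\le \ell-1,\,1\le s\le r}$ and read the smoothness conditions \eqref{eq:geom-cont} as linear constraints on the parameter matrix $A=(A^{[1]}\ \cdots\ A^{[\ell]})$. Writing $X^{[i]}(t)=A^{[i]}\mathsf{Mom}^{m_i}(\ell t)$ (after rescaling each piece to $[0,\tfrac1\ell]$), the $s$-th derivative of $X^{[i]}$ at the endpoint $\tfrac1\ell$ is $A^{[i]}$ applied to a fixed vector of falling factorials times powers of $\ell$. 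The $s$-th derivative of $X^{[i+1]}$ at $0$ is simply $s!\,\ell^s$ times the $s$-th column of $A^{[i+1]}$. Hence \eqref{eq:geom-cont} reads
$$a^{[i+1]}_s=\frac{\rho_{i,s}}{s!}\sum_{j\ge s}\frac{j!}{(j-s)!}\,a^{[i]}_j ,$$
with $a^{[i]}_j$ the $j$-th column of $A^{[i]}$; the powers of $\ell$ cancel. Since $m_{i+1}\ge r$, these equations express the first $r$ columns of each $A^{[i+1]}$ (for $i\ge1$) as linear combinations, with coefficients polynomial in $\rho$, of the columns of $A^{[i]}$. Recursively, all columns of $A$ become linear in the free columns: all of $A^{[1]}$ and the last $m_{i+1}-r$ columns of each $A^{[i+1]}$. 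So $A=B\,R_\rho$ with $B\in\mathbb R^{d\times N}$, $N=M-r(\ell-1)$, and $R_\rho\in\mathbb R^{N\times M}$ a matrix whose entries are polynomial in $\rho$.

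We then set $\Psi_\rho:=R_\rho\circ\mathsf{PwMom}^m:[0,1]\to\mathbb R^N$. A path $X$ is a geometric $m$-spline of regularity $r$ with parameters $\rho$ exactly when $X=A\circ\mathsf{PwMom}^m$ with $A$ satisfying the constraints, that is, exactly when $X=B\circ\Psi_\rho$ for some $B$. So the class consists precisely of the paths $B\Psi_\rho$ with $\rho\in\mathbb R_{>0}^{K}$, $K=r(\ell-1)$. The positive orthant is Zariski dense in $\mathbb R^K$ because it is a nonempty open set, and a polynomial vanishing on it vanishes identically. Conversely, every path $B\Psi_\rho$ with $\rho$ positive is such a spline, so the dictionary property holds on this dense subset.

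For algebraicity, equivariance \eqref{eq:equivariance_sig} gives $\sigma^{\le k}(\Psi_\rho)=R_\rho*\sigma^{\le k}(\mathsf{PwMom}^m)$. The core tensor is a fixed rational tensor (\Cref{prop:closed-form-core}), and the entries of $R_\rho$ are polynomials in $\rho$, so every entry of $\sigma^{\le k}(\Psi_\rho)$ is a polynomial in $\rho$ for every $k$.

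The main obstacle is the bookkeeping in the first step. One has to check carefully that the recursion never constrains a column twice. This holds because $m_{i+1}\ge r$ ensures the $r$ constrained columns exist and are distinct from the free ones. A piece with $m_{i+1}=r$ has all of its columns determined by the previous piece, and the free parameters simply propagate. Reading the derivative conditions correctly under the rescaling to $[0,\tfrac1\ell]$ also requires care. One must confirm as well that no hidden condition is lost, namely that the recursion reproduces exactly the solution set of \eqref{eq:geom-cont}. This is clear, since the equations are already solved for the constrained columns.
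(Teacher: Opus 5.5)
Your proposal is correct and follows the paper's route. You rewrite \eqref{eq:geom-cont} as the linear column relations $a^{[i+1]}_s=\rho_{i,s}\sum_{j\ge s}\binom{j}{s}a^{[i]}_j$ and factor $A$ as the free columns times a matrix polynomial in $\rho$ (your $R_\rho$ is the paper's $B_\rho$, your $B$ its $\hat A$), then take $R_\rho\circ\mathsf{PwMom}^m$ as the dictionary with algebraicity from equivariance; your explicit checks that the powers of $\ell$ cancel and that the positive orthant is Zariski dense supply details the paper leaves implicit.
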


To construct this dictionary, we reinterpret \eqref{eq:geom-cont} as a matrix equation. In this way, we will obtain a matrix $B_\rho$ such that $B_\rho \circ \mathsf{PwMom}^m$ is the desired parametrized dictionary.

Write $m=(m_1,\ldots,m_l)$. Let $A$ be a $d \times M$ matrix. We introduce the following notation  for its columns:
$$\begin{pmatrix} A^{[1]}_1 & \dots & A^{[1]}_{m_1} & \dots & A^{[\ell]}_{1} & \dots & A^{[\ell]}_{m_\ell}\end{pmatrix}:=A$$

Now consider the path $A\circ \mathsf{PwMom}^m$. Plugging it into equation \eqref{eq:geom-cont}, we see that it is geometrically continuous if and only if we have
\begin{equation}\label{eq:geomcont-param}
    A^{[i+1]}_s = \rho_{i,j} \sum_{j=s}^{m_i} \binom{j}{s} A^{[i]}_j
\end{equation}
for all $i$ and $s \leq r$. 
In particular, the columns $A^{[i+1]}_s$ for $s\leq r$ are completely determined by the matrix $A^{[i]}$. For $i\geq 2$, let us define $\hat A^{[i]}$ as the $d\times (m_i - r)$ matrix obtained from $A^{[i]}$ by removing the first $r$ columns. We set
$$\hat A := \begin{pmatrix}
    \hat A^{[1]} & \dots & \hat A^{[\ell]}
\end{pmatrix}.$$
This is a $d \times \kappa$ matrix, where $\kappa := M - (\ell-1)\cdot r$.

Next, we recursively define $\kappa \times m_i$ matrices $B_\rho^{[i]}$ for $1\leq i \leq \ell$. We set
$$B_\rho^{[1]} := \begin{pmatrix}\mathrm{I}_{m_1} \\  0_{(\kappa - m_1) \times m_1} \end{pmatrix}$$
which serves as the base case of the recursion. 
For $1 \leq i \leq \ell-1$ and $s \leq r$ we then define columnwise
$$B_{\rho,s}^{(i+1)} := \rho_{i,s} B_\rho^{[i]} \begin{pmatrix}
    0 & \dots & 0 & \binom{s}{s} & \dots & \binom{m_i}{s}
\end{pmatrix}^\top$$
and for $r < s \leq m_i$
$$B_{\rho,s}^{[i+1]} := e_{s_0 + s}$$
where $s_0 := \sum_{j=1}^i m_j - i\cdot r$.
\begin{definition}
    We define the \textit{core spline transformation matrix}
    $$B_\rho := \begin{pmatrix}
    B_\rho^{[1]} & \dots & B_\rho^{[\ell]}
\end{pmatrix}.$$
\end{definition}
To indicate its dependence on $r$ and $m$ we will sometimes write $B^{m,r}_\rho$ for $B_\rho$ in the following.

From this we now obtain our desired dictionary.

\begin{proof}[Proof (of \Cref{thm:geom-spline-dict})]
    We claim that equation \eqref{eq:geomcont-param} is equivalent to the matrix equation $A= \hat A \cdot B_\rho$.
    
    Indeed, we can write $\hat A B_\rho$ as
    $$\begin{pmatrix}
        \hat A B_\rho^{[1]} & \dots & \hat A B_\rho^{[\ell]}
    \end{pmatrix}$$
    Thus $A = \hat A B_\rho$ if and only if $A^{[i]} = \hat A B_\rho^{[i]}$ for all $i$. Assume \eqref{eq:geomcont-param} holds; then we can use induction to show that $A^{[i]} = \hat A B_\rho^{[i]}$. For $i=1$ there is nothing to verify. Otherwise we have
    \begin{align*}(\hat A B_\rho^{[i]})_s &= \hat A B_{\rho,s}^{[i]} \\
    &= \rho_{i-1,s} \hat A B_{\rho,s}^{[i-1]} \begin{pmatrix}
    0 & \dots & 0 & \binom{s}{s} & \dots & \binom{m_{i-1}}{s}
\end{pmatrix}^\top \\
&= \rho_{i-1,s} A^{[i-1]} \begin{pmatrix}
    0 & \dots & 0 & \binom{s}{s} & \dots & \binom{m_{i-1}}{s} \\
\end{pmatrix}^\top \\
&= A^{[i]}_s
\end{align*}
for $s \leq r$, where we use the induction hypothesis in the third equality, and \eqref{eq:geomcont-param} in the last. For $s > r$ there is again nothing to show. Finally, reading the argument in reverse yields the converse implication.

It follows that $m$-splines of regularity $r$ are precisely given by $(\hat A B_\rho)\circ \mathsf{PwMom}^m$ for a $d\times \kappa$-matrix $\hat A$ and some $\rho>0$, where $\kappa = M - (s-1)r$. Thus, by associativity, $(B_\rho \circ \mathsf{PwMom}^m)_\rho$ is a parametrized dictionary for this class of paths. Since the entries of $B_\rho$ are polynomials in $\rho$, equivariance of the signature implies that $(B_\rho \circ \mathsf{PwMom}^m)_{\rho}$ is indeed an algebraically parametrized dictionary.
\end{proof}
\begin{corollary}\label{cor:dictForParametricSplines}
    $B^{m,r}_1 \circ \mathsf{PwMom}^m$ is a dictionary for parametric $m$-splines of regularity $1$.
\end{corollary}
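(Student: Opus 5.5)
The plan is to specialize the proof of \Cref{thm:geom-spline-dict} to the constant parameter $\rho \equiv 1$, that is $\rho_{i,s}=1$ for all $i$ and $s$. (As printed, the corollary says ``regularity $1$''. I read it as regularity $r$, with $B^{m,r}_1$ meaning $B_\rho$ evaluated at the all-ones parameter. The argument is the same for $r=1$.) A parametric $m$-spline of regularity $r$ is, by \Cref{def:splines}, exactly a geometric one whose continuity conditions \eqref{eq:geom-cont} hold with every $\rho_{i,s}=1$. So the goal is to show that the paths in this class are precisely $\hat A \circ (B^{m,r}_1\circ \mathsf{PwMom}^m)$ with $\hat A$ ranging over $\mathbb R^{d\times\kappa}$.

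The first step uses \Cref{lem:pwPolyDict}. Every parametric $m$-spline of regularity $r$ is in particular an $m$-spline of regularity $0$. Hence it can be written as $A\circ\mathsf{PwMom}^m$ with $A\in\mathbb R^{d\times M}$, using the block column notation $A^{[i]}_s$.

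The second step substitutes this representation into \eqref{eq:geom-cont} with all $\rho_{i,s}=1$. The $s$-th derivative of the $i$-th monomial block at the right endpoint gives the binomial sums in \eqref{eq:geomcont-param}, and the $s$-th derivative of the $(i+1)$-st block at $0$ picks out the column $A^{[i+1]}_s$. So $C^r$-continuity at the control points is equivalent to \eqref{eq:geomcont-param} with $\rho=1$. The derivative normalization, i.e.\ the factor $s!$ and the rescaling of each piece to $[0,1]$ versus $[0,\tfrac1\ell]$, is absorbed in that identity exactly as in the theorem's proof; no new computation is needed.

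The third step reruns the induction from the proof of \Cref{thm:geom-spline-dict} verbatim with $\rho=1$. It shows that \eqref{eq:geomcont-param} holds if and only if $A=\hat A\, B^{m,r}_1$, where $\hat A$ consists of the free columns. Conversely, for any $\hat A\in\mathbb R^{d\times\kappa}$, the matrix $A:=\hat A B^{m,r}_1$ satisfies the continuity equations. Therefore $A\circ\mathsf{PwMom}^m$ is a parametric spline of regularity $r$. Associativity then gives $A\circ\mathsf{PwMom}^m=\hat A\circ(B^{m,r}_1\circ \mathsf{PwMom}^m)$, which is the dictionary property.

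The only point needing care is that the definition of a parametrized dictionary allows $\rho$ to range over a Zariski dense set. Here we need the exact, single value $\rho=1$, so the equivalence must be checked pointwise at $\rho=1$ rather than generically. The induction is a pointwise statement for each fixed $\rho$, so this goes through. As a consequence, applying equivariance \eqref{eq:equivariance_sig} yields the single core tensor $\sigma^{\leq k}(B^{m,r}_1\circ\mathsf{PwMom}^m)=B^{m,r}_1 * C$.
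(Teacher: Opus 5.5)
Your proposal is correct and matches the paper's argument. The paper gives no separate proof: the corollary is read off from the proof of \Cref{thm:geom-spline-dict} by fixing $\rho_{i,s}=1$. There the equivalence between \eqref{eq:geomcont-param} and $A=\hat A B_\rho$ (including the converse, that every $\hat A B_\rho$ satisfies the continuity equations) holds pointwise in $\rho$, exactly as you observe, and your reading of ``regularity $1$'' as regularity $r$ is the intended one.
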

\begin{corollary}\label{cor:coretensorforParametricSplines}
Let $C$ be the (sequence of) core tensors from \Cref{thm:coretensor} for piecewise polynomial paths. 
\begin{enumerate}[(i)]
    \item The variety $\mathcal{S}_{d,k,m}^r$ is described by the parametrized core tensor
    $B^{m,r}_\rho*C$. 
    
    \item The variety $\mathcal{P}_{d,k,m}^r$ is described by the core tensor 
    $B^{m,r}_1*C$. 
\end{enumerate}
\end{corollary}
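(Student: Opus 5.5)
The plan is to combine the matrix factorization from the proof of \Cref{thm:geom-spline-dict} with the equivariance \eqref{eq:equivariance_sig} of the signature, in the same way \Cref{thm:coretensor} was derived from \Cref{lem:pwPolyDict}.

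\textbf{Step 1 (parametrizing the paths).} By the proof of \Cref{thm:geom-spline-dict}, a path $X$ is a geometric $m$-spline of regularity $r$ if and only if
\[
X=(\hat A B^{m,r}_\rho)\circ\mathsf{PwMom}^m
\]
for some $\hat A\in\mathbb R^{d\times\kappa}$ and some $\rho$ with positive entries. Specializing to $\rho_{i,s}=1$ for all $i,s$ gives exactly the parametric $m$-splines of regularity $r$. (This is \Cref{cor:dictForParametricSplines}, where ``regularity $1$'' should read ``regularity $r$''.)

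\textbf{Step 2 (images of the signature map).} By associativity, $X=\hat A\circ(B_\rho\circ\mathsf{PwMom}^m)$. Applying \eqref{eq:equivariance_sig} twice, and using that the congruence action is a group action, i.e.\ $\hat A*(B*C)=(\hat AB)*C$, we get
\[
\sigma^{\leq k}(X)=\hat A*\bigl(B_\rho*C\bigr),\qquad C=\sigma^{\leq k}(\mathsf{PwMom}^m).
\]
Hence the set of signatures of geometric splines is $\{\hat A*(B_\rho*C)\mid \hat A\in\mathbb R^{d\times\kappa},\ \rho>0\}$, and the set of signatures of parametric splines is $\mathbb R^{d\times\kappa}*(B_1*C)$. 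The same holds at a fixed level $k$ by projecting to $(\mathbb R^d)^{\otimes k}$, which explains the subscript $k$ in the statement.

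\textbf{Step 3 (Zariski closures).} We then pass to closures over $\mathbb C$.
\begin{itemize}
\item[(ii)] Since $\mathbb R^{d\times\kappa}$ is Zariski dense in $\mathbb C^{d\times\kappa}$ and the orbit map $\hat A\mapsto \hat A*(B_1*C)$ is polynomial, the closure of the real image equals the closure of the complex orbit. This proves (ii).
\item[(i)] Here the positive orthant $\{\rho>0\}$ is Zariski dense in $\mathbb C^K$, and the entries of $B_\rho*C$ are polynomial in $\rho$. So the parameter set $\mathbb R^{d\times\kappa}\times\mathbb R^K_{>0}$ is Zariski dense in $\mathbb C^{d\times\kappa}\times\mathbb C^K$. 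Therefore $\mathcal S^r_{d,k,m}$ is the closure of the orbit of the core variety $\overline{\{B_\rho*C\}}$.
\end{itemize}

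\textbf{Main obstacle.} The only genuinely non-formal point is the density argument in Step 3: the image of a Zariski dense set under a polynomial map is dense in the image of the full parameter space. This follows from continuity of polynomial maps in the Zariski topology, so I expect no real difficulty.
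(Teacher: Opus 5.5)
Your proposal is correct and is essentially the paper's argument. The paper's one-line proof cites the factorization $X=(\hat A B_\rho)\circ\mathsf{PwMom}^m$ from \Cref{thm:geom-spline-dict} together with equivariance \eqref{eq:equivariance_sig}; you have also filled in the routine Zariski-density step for passing from real parameters $\hat A$, $\rho>0$ to the complex orbit closure, and you rightly read ``regularity $1$'' in \Cref{cor:dictForParametricSplines} as ``regularity $r$''.
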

\begin{proof}
    This follows immediately from \Cref{thm:geom-spline-dict} and equivariance \eqref{eq:equivariance_sig} of the signature. 
\end{proof}
In \Cref{fig:dictionariesForParametricSplines} we illustrate two dictionaries for parametric $m$-splines with regularity $r=1$, according to \Cref{cor:dictForParametricSplines}.

\begin{figure}[h]
    \centering
    \begin{subfigure}{0.3\linewidth}
        \centering
        \includegraphics[width=0.7\linewidth]{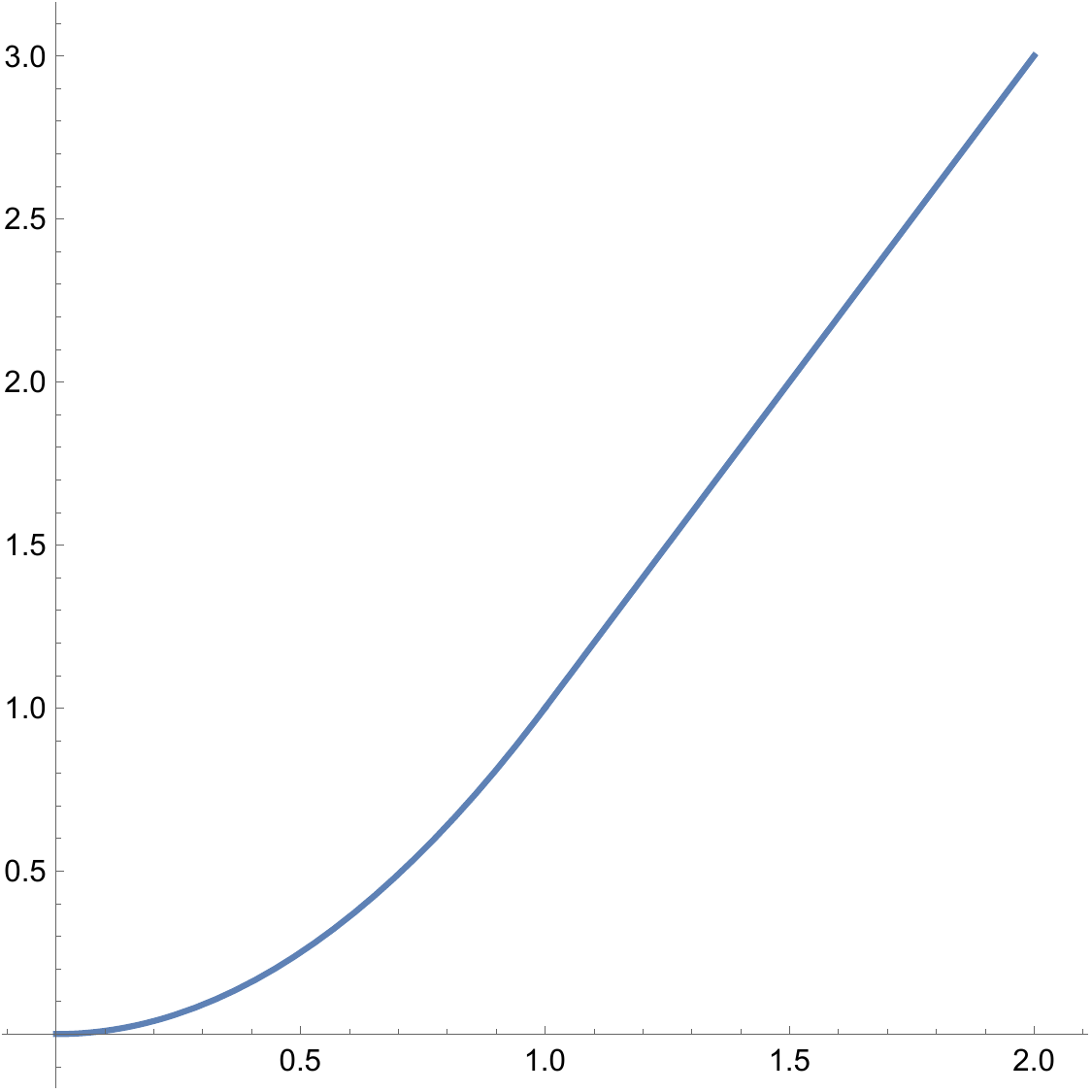}  
        \caption{$m=(2,1)$}
    \end{subfigure}
     \begin{subfigure}{0.3\linewidth}
        \centering
        \includegraphics[width=0.7\linewidth]{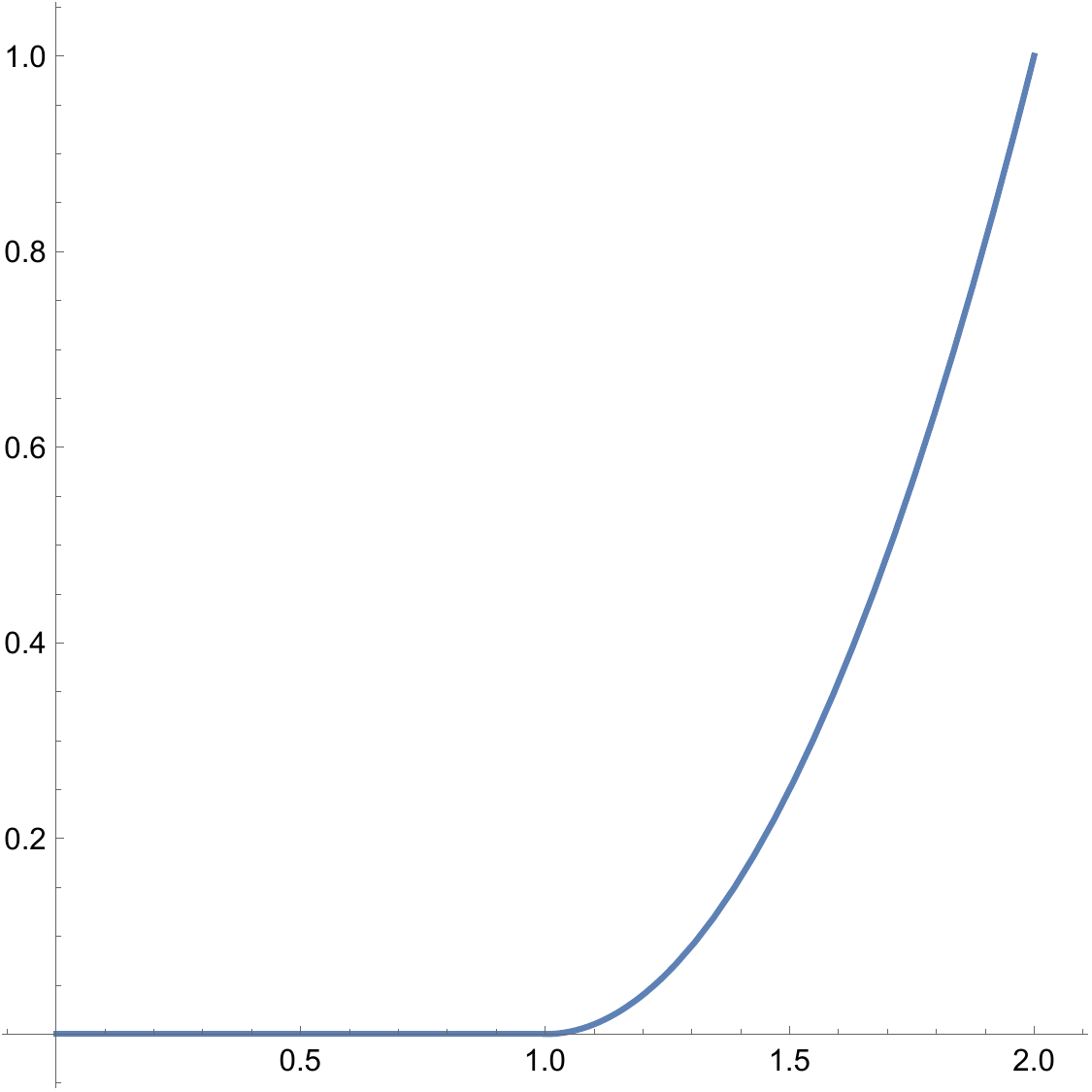}  
        \caption{$m=(1,2)$}
    \end{subfigure}
    \caption{Dictionaries for $\mathcal{P}^{1}_{2,\leq k,m}$}
    \label{fig:dictionariesForParametricSplines}
\end{figure}

\begin{example}
    Let us determine the core variety for truncation level $2$ and geometric $(2,1)$-splines of regularity $1$. The associated parametrized dictionary is $B_\rho^{(2,1)}\circ \mathsf{PwMom}^m$ which is the class of paths
    $$X_\rho := \begin{pmatrix} t \\ t^2 \end{pmatrix} \sqcup \rho \begin{pmatrix} t \\ 2t \end{pmatrix}.$$
    We have
    \begin{align*}
        \sigma^{(1)}(X_\rho) &= \begin{pmatrix} 1 + \rho \\ 1 + 2\rho \end{pmatrix} \\
        \sigma^{(2)}(X_\rho) &= \begin{pmatrix} \frac{1}{2}\rho^2 + \rho + \frac{1}{2} & \rho^2 + 2\rho + \frac{2}{3} \\ \rho^2 + \rho + \frac{1}{3} & 2\rho^2 + 2\rho + \frac{1}{2} \end{pmatrix}
    \end{align*}
    This parametrizes a curve in the $3$-dimensional space $\mathcal G^{\leq 2}(\mathbb R^2)$. Using Lyndon coordinates $x_1,x_2,x_{12}$, it is the Zariski image of the map
    $$\rho \mapsto (1 + \rho, 1 + 2\rho, \rho^2 + 2\rho + \frac{2}{3}).$$
    Using \textsc{Macaulay 2}, we see that the image is the intersection of the quadric $3x_2^2 + 6x_2 - 12x_{12} = 1$ with the affine hyperplane $2x_1 - x_2 = 1$. The orbit closure of this variety under the action of $2\times 2$-matrices on $\mathcal G^{\leq 2}(\mathbb R^2)$ is the whole space $\mathcal G^{\leq 2}(\mathbb R^2)$.
\end{example}

\section{Dimension and degree of spline varieties}

Using the dictionaries constructed in the last section, we now study dimension, degree and generators of the spline varieties we defined.

\subsection{Matrix varieties}

In \cite[Theorem 3.4]{AFS19}, it was shown that the varieties of polynomial paths with degree $M$ and piecewise linear paths with $M$ segments agree, that is $$\mathcal{M}_{d,M} := \mathcal S^0_{d,2,(M)} = \mathcal S^0_{d,2,m}$$
with $m = (1,\ldots,1)\in\comp(M)$. We generalize this result to our setting.

\begin{theorem}\label{thm:sig-matrices}
    For $m \in \comp_{\geq r}(M)$, the matrix varieties $\mathcal S^{r}_{d,2,m}$ and $\mathcal P^{r}_{d,2,m}$ are identical and agree with the variety $\mathcal M_{d,M-(\ell - 1)r}$. 
\end{theorem}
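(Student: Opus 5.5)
We reduce everything to a statement about orbit closures of single $\kappa\times\kappa$ matrices, where $\kappa := M-(\ell-1)r$. By \Cref{cor:coretensorforParametricSplines} and \Cref{thm:geom-spline-dict}, at level $2$ the parametrized core is the matrix
$$C_\rho := B^{m,r}_\rho\, C^{(2)}\, (B^{m,r}_\rho)^\top \in \mathbb R^{\kappa\times\kappa}.$$
Consequently $\mathcal S^r_{d,2,m}$ is the closure of $\bigcup_\rho \{A C_\rho A^\top \mid A\in\mathbb C^{d\times\kappa}\}$, and $\mathcal P^r_{d,2,m}$ is the orbit closure of $C_1$ alone. Likewise $\mathcal M_{d,\kappa}$ is the orbit closure of the level-$2$ core of $\mathsf{Mom}^\kappa$. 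Since $\mathcal P^r_{d,2,m}\subseteq\mathcal S^r_{d,2,m}$, it suffices to prove two inclusions: that $\mathcal S^r_{d,2,m}\subseteq\mathcal M_{d,\kappa}$, and that $\mathcal M_{d,\kappa}\subseteq\mathcal P^r_{d,2,m}$.

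The structural input is the decomposition of a level-$2$ signature matrix into symmetric and skew parts. By the shuffle relations, any such matrix has the form $\tfrac12 uu^\top + Q$, where $u=\sigma^{(1)}$ and $Q$ is skew. The action $A*\,\cdot$ sends this pair to $(Au, AQA^\top)$. I would prove the following normal-form lemma. Call a pair $(u,Q)\in\mathbb C^\kappa\times\wedge^2\mathbb C^\kappa$ \emph{generic} if
\begin{itemize}
\item for $\kappa$ even, $Q$ is invertible and $u\neq 0$;
\item for $\kappa$ odd, $Q$ has rank $\kappa-1$ and $u\notin\operatorname{im}Q$.
\end{itemize}
Then all generic pairs form a single $GL_\kappa$-orbit, and this orbit is Zariski dense in $\mathbb C^\kappa\times\wedge^2\mathbb C^\kappa$.

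For the proof in the even case, I would first bring $Q$ to the standard symplectic form $J$ by a congruence. The symplectic group then acts transitively on nonzero vectors, which moves $u$ to any fixed nonzero vector. In the odd case, I would bring $Q$ to $J\oplus 0$. The condition $u\notin\operatorname{im}Q$ says that the last coordinate of $u$ is nonzero. A transformation fixing $J\oplus 0$, namely block lower-triangular with a symplectic block, then normalizes $u$ to $e_\kappa$.

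With the lemma, $\overline{\mathbb C^{d\times\kappa}*(\tfrac12uu^\top+Q)}$ is the same variety for every generic pair. This variety is $\mathcal M_{d,\kappa}$, because the moment curve core is generic: this is the case $\ell=1$, and it is implicit in \cite{AFS19}. Every non-generic pair lies in the closure of the generic orbit, so its orbit closure is contained in $\mathcal M_{d,\kappa}$. This gives $\mathcal S^r_{d,2,m}\subseteq\mathcal M_{d,\kappa}$ immediately. For the reverse inclusion it then suffices to show that the single matrix $C_1$, i.e.\ the parametric dictionary $\psi:=B^{m,r}_1\circ\mathsf{PwMom}^m$ in $\mathbb R^\kappa$, yields a generic pair.

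I expect this genericity check to be the main obstacle. I would first try induction on $\ell$ via Chen's identity \eqref{eq:chen}. For a concatenation, the increments add and the Lévy-area matrices satisfy
$$Q(X\sqcup Y)=Q(X)+Q(Y)+\tfrac12\bigl(a b^\top-b a^\top\bigr),$$
where $a,b$ are the increments of $X,Y$. The last piece of $\psi$ contributes $m_\ell-r$ new coordinate directions, and its moment-curve part is nondegenerate on those. Its first $r$ columns are tied to the previous piece. So one checks that the rank of the skew part increases by exactly $m_\ell-r$, up to parity bookkeeping, and that the increment condition is preserved. If this bookkeeping becomes messy, a fallback is to verify genericity as an open condition. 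Genericity is a nonvanishing condition on a Pfaffian (even case) or on a bordered Pfaffian (odd case). For the $\rho$-family it therefore suffices to exhibit one explicit parametric spline of the given type in $\mathbb R^\kappa$ whose area form is nondegenerate in this sense. Such a spline could be built by choosing the free columns of $\hat A$ so that successive pieces rotate into fresh coordinate planes.
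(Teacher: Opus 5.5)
Your strategy is the paper's: reduce to the level-$2$ core, show that every core $\tfrac12uu^\top+Q$ of full rank $\kappa=M-(\ell-1)r$ is congruent to a single normal form, and identify that orbit closure with $\mathcal M_{d,\kappa}$ via the moment curve. Your symplectic normal form is an elementary substitute for the Horn--Sergeichuk classification behind \Cref{lem:sig-matrices}. There is one small slip in it. For $u\mapsto gu$, $Q\mapsto gQg^\top$, the stabilizer of $J\oplus 0$ is block \emph{upper} triangular, $\left(\begin{smallmatrix} S & b\\ 0&\lambda\end{smallmatrix}\right)$ with $S$ symplectic, and it is the column $b$ that clears the first $\kappa-1$ coordinates of $u$. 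Your density argument for $\mathcal S^r_{d,2,m}\subseteq\mathcal M_{d,\kappa}$ is sound. It is cleaner than the paper's, because it needs no information about $C_\rho$.

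The gap is the reverse inclusion $\mathcal M_{d,\kappa}\subseteq\mathcal P^r_{d,2,m}$. For $d\ge\kappa$ it is equivalent to $C_1=B^{m,r}_1C^{(2)}(B^{m,r}_1)^\top$ having full rank $\kappa$; since $u=\psi(1)\neq 0$, this is exactly your genericity. You name this as the main obstacle but do not prove it, and neither of your routes does.
\begin{itemize}
\item The Chen induction is not yet an argument. The first $r$ Taylor columns of the new piece lie in the old coordinates, so $Q(Y)$ and the cross term also change the old block. Nothing makes the skew rank grow by a fixed amount, and the skew rank is the wrong invariant anyway: you need the Pfaffian, resp.\ the bordered Pfaffian.
\item The fallback is circular. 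Every parametric spline of this type in $\mathbb R^\kappa$ is $\hat A\psi$ with $\psi=B_1^{m,r}\circ\mathsf{PwMom}^m$, and $\det\sigma^{(2)}(\hat A\psi)=\det(\hat A)^2\det C_1$. So exhibiting one nondegenerate example is literally the same as proving $\det C_1\neq 0$. No choice of the free columns of $\hat A$ helps, and pieces cannot be rotated into fresh planes, because their low-order Taylor data are dictated by the previous piece.
\end{itemize}

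The paper settles this step by noting two facts: $C^{(2)}=\sigma^{(2)}(\mathsf{PwMom}^m)$ is block upper triangular with invertible diagonal blocks $\sigma^{(2)}(\mathsf{Mom}^{m_i})$, and $B_\rho$ has full row rank. If you import this, be aware that these facts alone do not force $B_\rho C^{(2)}B_\rho^\top$ to be invertible. The symmetric part of $C^{(2)}$ has rank one, so no definiteness is available. In the paper's own example $m=(2,1)$, $r=1$, the displayed $\sigma^{(2)}(X_\rho)$ has determinant $(\rho/2+1/6)^2$. This vanishes at $\rho=-1/3$, although both facts hold there.

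So you need an argument that genuinely uses $\rho=1$. A natural formulation: $\det C_1\neq 0$ if and only if $(f,g)\mapsto\int_0^1 f\,dg$ is nondegenerate on the $\kappa$-dimensional space $V$ of scalar parametric $m$-splines of regularity $r$ with $f(0)=0$. A kernel element $f$ is $L^2$-orthogonal to $V'=\{g'\mid g\in V\}$ and satisfies $f(1)=0$. For $\ell=1$ this forces $f$ to be a multiple of a shifted Legendre polynomial, which is nonzero at $0$, so $f=0$. You would have to supply an analogue of this for splines.
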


The proof relies on the following linear algebra lemma.
\begin{lemma}\label{lem:sig-matrices}
    Let $A\in\mathbb C^{d\times d}$ be a rank $n$ matrix such that $A^{\mathsf{sym}}$ has rank $1$. Let
    $$\tau := \begin{pmatrix} 0 & 1 \\ -1 & 0 \end{pmatrix} \quad \text{and} \quad \omega := \begin{pmatrix} 1 & 1 \\ -1 & 0 \end{pmatrix}$$
    Then $A$ is congruent to $(1) \oplus \tau^{\frac {n-1} 2} \oplus (0)^{\oplus d - 1 - \frac {n-1} 2}$ if $n$ is odd,  and it is congruent to $\omega \oplus \tau^{\frac {n-2} 2} \oplus (0)^{\oplus d - 1 - \frac {n} 2}$ if $n$ is even.
\end{lemma}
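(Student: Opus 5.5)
The plan is to work with the bilinear form $A(x,y)=x^\top A y$ and its decomposition $A=S+K$ into symmetric part $S=A^{\mathsf{sym}}$ and skew part $K$. Congruence $A\mapsto PAP^\top$ acts on $S$ and $K$ simultaneously, so a normal form for $A$ amounts to a basis $b_1,\dots,b_d$ adapted to both forms. Since we are over $\mathbb C$ and $S$ has rank $1$, we can write $S=vv^\top$ with $v\neq 0$, i.e.\ $S(x,y)=\lambda(x)\lambda(y)$ for a nonzero linear functional $\lambda$. Put $H:=\ker\lambda$. A basis has $S$-Gram matrix $E_{11}$ exactly when $b_2,\dots,b_d\in H$ and $\lambda(b_1)=1$. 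Note that $b_1$ is only determined modulo $H$, and I will use this freedom.

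\emph{Step 1 (dichotomy).} Consider the affine family $b_1\in\lambda^{-1}(1)$ and the restricted functionals $K(b_1,\cdot)|_H$. I split into two cases.

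\emph{Odd case.} Suppose some $b_1$ satisfies $K(b_1,\cdot)|_H=0$. Then $A(b_1,h)=A(h,b_1)=0$ for all $h\in H$, since $S(b_1,h)=0$ as well. The form $A|_H$ equals $K|_H$, which is skew. By the standard normal form for alternating forms, pick a basis of $H$ in which $K|_H=\tau^{\oplus q}\oplus 0$. Then $A\cong(1)\oplus\tau^{\oplus q}\oplus 0$, so $n=1+2q$ is odd, as claimed.

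\emph{Even case.} Suppose every $b_1$ has $K(b_1,\cdot)|_H\neq 0$. Then the rank of $K$ must be at least $2$ on $\operatorname{span}(b_1)+H$, and I aim for an $\omega$-block. Choose $b_2\in H$ with $K(b_1,b_2)=1$. The Gram matrix of $A$ on $\operatorname{span}(b_1,b_2)$ is then exactly $\omega$, which is invertible.

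\emph{Step 2 (splitting off the block).} I then want a complement $W\subseteq H$ with $A(b_i,w)=A(w,b_i)=0$ for $i=1,2$ and all $w\in W$. For $w\in H$ these conditions reduce to $K(b_1,w)=K(b_2,w)=0$, since $S(b_i,w)=0$ when $w\in H$. So I take $W:=H\cap K^{\perp}(b_1,b_2)$. On $W$ the form $A$ is again alternating, and normalising it gives $\tau^{\oplus q}\oplus 0$, hence the block $\omega\oplus\tau^{\oplus q}\oplus 0$ of rank $2+2q$.

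The main obstacle is a dimension count. $W$ has codimension up to $3$ in $V$, while we need $\operatorname{span}(b_1,b_2)\oplus W=V$, i.e.\ codimension $2$. I would resolve this by exploiting the freedom $b_1\mapsto b_1+h$ with $h\in H$. Since $b_2\in H$ is $K$-dual to $b_1$ on $H$, we may replace $b_1$ by $b_1+h$ so that the functional $K(b_1,\cdot)|_H$ becomes proportional to $K(b_2,\cdot)|_H$ restricted appropriately. Concretely, I would first pick $b_2$ and a $K$-symplectic basis of $H$ adapted to the functional $K(b_1,\cdot)|_H$, and then choose $h$ to kill all components of that functional except the one dual to $b_2$. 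With this adjustment, the three defining conditions on $W$ collapse to two independent ones, and the decomposition is direct.

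\emph{Step 3 (consistency).} Finally I would check that the two cases are distinguished by the parity of $n=\operatorname{rank}A$, so the case split matches the statement. I would also count the zero summands, padding $\tau^{\oplus q}$ with zeros so that the total size is $d$; the exponent $\frac{n-1}{2}$, respectively $\frac{n-2}{2}$, is exactly $q$.
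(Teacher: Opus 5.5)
Your odd case and the dichotomy are fine, but the way you resolve the obstacle you identify in Step 2 does not work. Since $b_3,\dots,b_d$ must lie in $H$, you need $W=\ker\phi_1\cap\ker\phi_2\subseteq H$ to have dimension $d-2$, where $\phi_i:=K(b_i,\cdot)|_H$. Now $\phi_1(b_2)=1$ and $\phi_2(b_2)=K(b_2,b_2)=0$. So $\phi_1$ and $\phi_2$ are linearly dependent only if $\phi_2=0$, i.e.\ only if $b_2$ lies in the radical $R:=\operatorname{rad}(K|_H)$.

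Two consequences follow. First, $K(b_1,\cdot)|_H$ can never become proportional to a nonzero $K(b_2,\cdot)|_H$. Second, replacing $b_1$ by $b_1+h$ does not change $\phi_2$ at all. So the freedom in $b_1$ cannot repair the dimension count: for any $b_2\in H\setminus R$ with $K(b_1,b_2)=1$, $W$ has codimension $3$ in $V$ whatever $b_1$ is. Your ``concrete'' recipe only makes sense if $b_2$ is tacitly taken in $R$. You never say this, and you give no reason why such a $b_2$ with $K(b_1,b_2)=1$ exists in the even case.

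The missing idea is to choose $b_2\in R$ and to phrase the dichotomy in terms of $R$. The map $H\to H^*$, $h\mapsto K(h,\cdot)|_H$, has kernel $R$. By skew-symmetry its image lies in $\operatorname{Ann}(R)$, and by a dimension count it equals $\operatorname{Ann}(R)$. Consequently $K(b_1,\cdot)|_R$ does not depend on the choice of $b_1\in\lambda^{-1}(1)$, and it vanishes if and only if some $b_1$ has $K(b_1,\cdot)|_H=0$.

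So in your even case there is $b_2\in R$ with $K(b_1,b_2)=1$. Then $\phi_2=0$, and $W=\ker\phi_1$ is a hyperplane of $H$ not containing $b_2$. Hence $V=\langle b_1\rangle\oplus\langle b_2\rangle\oplus W$, and $W$ is $A$-orthogonal to $\langle b_1,b_2\rangle$ on both sides, with no adjustment of $b_1$ needed.

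With this repair your argument is a self-contained elementary proof. The paper instead invokes the Horn--Sergeichuk canonical form for congruence. It observes that exactly one canonical block can have rank-one symmetric part ($\Gamma_1=(1)$ or $\Gamma_2\cong\omega$), while all other blocks must be skew ($\tau$ or $(0)$). Your padding to total size $d$, i.e.\ $d-n$ zero summands, is the correct count; the zero exponents printed in the statement do not add up to $d$.
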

\begin{proof}
    By \cite[Theorem 1]{HORN20061010}, $A$ is congruent to a direct sum of blocks $J_n(0)$, $\Gamma_n$ and $H_{2n}(\mu)$. We refer to loc.\ cit.\ for the definitions. Among these blocks, only $H_2(-1) = \tau$ is skew-symmetric. Moreover, only for the two blocks $\Gamma_1 = (1)$ and $\Gamma_2 = \omega$ the symmetric part has rank $1$. All three blocks have full rank. Altogether, this implies the statement.
\end{proof}
    We can now prove the theorem.
\begin{proof}[Proof (of \Cref{thm:sig-matrices})]
    Let $\rho \in \mathbb R^{\ell - 1, r}$ and set
    $$C := \sigma^{(2)}(B_\rho \mathsf{PwMom}^m) = B_\rho \sigma^{(2)}( \mathsf{PwMom}^m)B_\rho^\top.$$
    As $C$ is a signature matrix, its symmetric part has rank $1$, see \cite[Section 21]{AFS19}. Thus, \Cref{lem:sig-matrices} applies. The matrix $\sigma^{(2)}(\mathsf{PwMom}^m)$ has full rank: indeed, by \Cref{prop:closed-form-core}, the matrix is upper block triangular, with every block on the diagonal of the form $\sigma^{(2)}(\mathsf{Mom}^{m_i})$. These blocks have full rank by \cite[Equation (21)]{AFS19}, such that the rank of $C$ is simply the rank of $B_\rho$. But the $(M - (\ell -1)r) \times M$-matrix $B_\rho$ has full rank as well; indeed, by construction every standard basis vector $e_i \in \mathbb R^{M - (\ell -1)r}$ appears as a column of $B_\rho$.\par
    Set $n := M - (\ell -1)r$. By \Cref{cor:coretensorforParametricSplines} it follows that both $\mathcal S^{r}_{d,2,m}$ and $\mathcal P^{r}_{d,2,m}$ are given by congruence orbits of $(1) \oplus \tau^{\frac {n-1} 2} \oplus (0)^{\oplus d - 1 - \frac {n-1} 2}$ if $n$ is odd, or $\omega \oplus \tau^{\frac {n-2} 2} \oplus (0)^{\oplus d - 1 - \frac {n} 2}$ if $n$ is even. But these are precisely the varieties $\mathcal M_{d,M-(\ell -1)r}$.
\end{proof}

\begin{corollary}For all $m\in\comp_{\geq r}(M)$, 
    $$\dim(\mathcal{M}_{d,m}) = Md-(\ell-1)dr - \binom{M-(\ell-1)r}{2}.$$
\end{corollary}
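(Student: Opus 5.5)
By \Cref{thm:sig-matrices}, the variety in question equals $\mathcal M_{d,n}$ with $n := M-(\ell-1)r$. It therefore suffices to show
$$\dim \mathcal M_{d,n} = nd - \binom{n}{2},$$
since $nd = Md-(\ell-1)dr$. I read $\mathcal M_{d,m}$ in the statement as this common variety $\mathcal S^r_{d,2,m}=\mathcal P^r_{d,2,m}$. I will assume $n\le d$, which is the range where the formula can hold. For $n>d$ the orbit fills the whole ambient space of matrices with rank-one symmetric part, of dimension $d+\binom d2$. This formula is recorded in \cite{AFS19}, but I would give a self-contained count.

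By the proof of \Cref{thm:sig-matrices}, $\mathcal M_{d,n}$ is the closure of $\{A C A^\top \mid A\in\mathbb C^{d\times n}\}$ for a fixed full-rank $n\times n$ matrix $C$. Its symmetric part has rank one, say $C^{\mathsf{sym}}=\tfrac12 cc^\top$ with $c\neq 0$ (for signature matrices, $c=\sigma^{(1)}$). Write $C=\tfrac12 cc^\top+K$ with $K$ skew. Then
$$ACA^\top=\tfrac12 (Ac)(Ac)^\top+AKA^\top .$$
For generic $A$ of full column rank with column space $V$, the vector $y=Ac$ ranges over $V$. The skew part $AKA^\top$ ranges over $\Lambda^2 V$. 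Moreover $y$ and the skew part can be chosen independently: pick $A$ on $V$ so that $AKA^\top$ is a prescribed generic element of $\Lambda^2V$, then compose with elements of the stabilizer of that skew form to move $y$.

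This yields the description
$$\mathcal M_{d,n}=\overline{\{\tfrac12 yy^\top+Q \mid V\in \mathrm{Gr}(n,d),\ y\in V,\ Q\in\Lambda^2 V\}}.$$
The parametrization is generically injective, because $V$ is recovered as the column space of a generic element (for $n\le d$ it has rank $n$). Hence
$$\dim\mathcal M_{d,n} = n(d-n)+n+\binom n2 = nd-\binom n2 .$$

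The main obstacle is justifying the independence claim, i.e.\ that the map $A\mapsto (y,Q)$ onto $V\times\Lambda^2V$ is dominant. The first thing I would try is a tangent-space rank computation at the explicit normal form of \Cref{lem:sig-matrices}. An equivalent route is to compute the congruence stabilizer of that normal form in $GL_n$ and check that it has dimension $n^2-n-\binom n2=\binom n2-\ldots$. Concretely, the orbit dimension must equal $n+\binom n2$, so the stabilizer should have dimension $n^2-n-\binom{n}{2}$.
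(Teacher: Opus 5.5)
Your argument is correct. Its first step is exactly the paper's: \Cref{thm:sig-matrices} identifies the variety with $\mathcal M_{d,n}$ for $n=M-(\ell-1)r$, and $nd=Md-(\ell-1)dr$.

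The difference lies in the second step. The paper simply quotes $\dim\mathcal M_{d,n}=nd-\binom n2$ from \cite[Theorem 3.4]{AFS19}. You rederive it by exhibiting $\mathcal M_{d,n}$ as the image of the rank-$\bigl(n+\binom n2\bigr)$ vector bundle $\{(V,y,Q)\}$ over $\mathrm{Gr}(n,d)$. The paper's route is a one-liner, but it silently inherits the hypothesis $n\le d$ of the cited result. (The corollary's ``for all $m$'' is in fact false from $n=d+2$ on.) Your route is self-contained, treats $n$ even and odd uniformly, and makes that hypothesis visible.

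Three small fixes are needed.
\begin{enumerate}[(i)]
\item The map $(V,y,Q)\mapsto\frac12 yy^\top+Q$ is generically $2$-to-$1$, not injective, since $(V,-y,Q)$ has the same image. This is harmless for the dimension count.
\item The independence step you flagged closes as follows, but it needs care for $n$ odd. There a generic $Q\in\Lambda^2V$ has rank $n-1$, so its stabilizer in $\mathrm{GL}(V)$ is transitive only on $V\setminus W$, where $W=\operatorname{col}(Q)$, not on all of $V\setminus\{0\}$. This suffices because invertibility of $C=\frac12cc^\top+K$ forces $\operatorname{rank}K=n-1$ and $c\notin\operatorname{col}(K)$; otherwise $\operatorname{col}(C)\subseteq\operatorname{col}(K)$. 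Hence $y=Ac\notin\operatorname{col}(AKA^\top)$, and the $\mathrm{GL}(V)$-orbit of $(y,Q)$ is open dense in $V\times\Lambda^2V$. For $n$ even, $K$ is nondegenerate and $\mathrm{Sp}$ is transitive on nonzero vectors.
\item Consistent with this, the stabilizer of $(c,K)$ in $\mathrm{GL}_n$ has dimension exactly $n^2-n-\binom n2=\binom n2$. Your trailing ``$-\ldots$'' should be dropped.
\end{enumerate}
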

\begin{proof}
    Combine \cite[Theorem 3.4]{AFS19} and \Cref{thm:sig-matrices}.
\end{proof}

Using the determinantal description of $\mathcal M_{d,M}$ from \cite[Theorem 3,4]{AFS19}, we can determine the degree of the variety in the case where $M$ is odd.

The variety $\mathcal{M}_{d,1}$ is a quadratic Veronese, so that we obtain $\deg(\mathcal{M}_{d,1}) = 2^{d-1}$. More generally, we have the following. 

\begin{theorem}\label{cor:HARRIS198471}
    If $M$ is odd, then
   $$\deg(\mathcal{M}_{d,M}) = 2^{M-1} \prod_{n = 0}^{d-M-1} \frac{\binom{d+n}{d-M-n}}{\binom{2n+1}{n}}$$
\end{theorem}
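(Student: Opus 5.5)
We use the determinantal description of $\mathcal M_{d,M}$ from \cite[Theorem 3.4]{AFS19}. A matrix $X\in\mathbb C^{d\times d}$ lies in $\mathcal M_{d,M}$ if and only if its symmetric part $X^{\mathsf{sym}}$ has rank at most $1$ and its skew-symmetric part $X^{\mathsf{skew}}$ has rank at most $M-1$, where $M$ odd makes $M-1$ even. The decomposition $X=X^{\mathsf{sym}}+X^{\mathsf{skew}}$ is a linear isomorphism $\mathbb C^{d\times d}\cong \mathrm{Sym}^2(\mathbb C^d)\oplus\wedge^2(\mathbb C^d)$. Under it, $\mathcal M_{d,M}$ is identified with the product $V\times P$ of two affine cones: $V$ is the cone of symmetric matrices of rank $\le 1$, and $P$ is the cone of skew-symmetric matrices of rank $\le M-1$. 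For odd $M$ this is consistent with the generic normal form of \Cref{lem:sig-matrices}: $(1)\oplus\tau^{(M-1)/2}$ has symmetric part of rank $1$ and skew part of rank exactly $M-1$. As a check, $\dim V=d$ and $\dim P=(M-1)d-\binom{M}{2}$, which sum to $Md-\binom M2$, matching the dimension corollary with $r=0$.

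The degree of a product of affine cones in a direct sum of vector spaces, each cut out by homogeneous ideals, is the product of the degrees. To see this, note that the projective closure of $V\times P$ corresponds to the join of $\mathbb P(V)$ and $\mathbb P(P)$ in disjoint linear subspaces, and the degree of such a join is the product of the degrees. Equivalently, the Hilbert series of $\mathbb C[V]\otimes\mathbb C[P]$ is the product of the two series, so the leading coefficients multiply. Hence $\deg\mathcal M_{d,M}=\deg V\cdot\deg P$.

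The factor $\deg V$ comes from $V$ being the cone over the quadratic Veronese embedding of $\mathbb P^{d-1}$, so $\deg V=2^{d-1}$, as already noted for $\mathcal M_{d,1}$. The factor $\deg P$ is the degree of the Pfaffian variety of $d\times d$ skew matrices of rank $\le 2p$ with $p=(M-1)/2$. Harris and Tu computed this degree \cite{HARRIS198471}, and for rank $\le 2p$ it is
\[
\frac{1}{2^{\,d-2p-1}}\prod_{n=0}^{d-2p-2}\frac{\binom{d+n}{d-2p-1-n}}{\binom{2n+1}{n}}.
\]
Substituting $2p=M-1$ gives $d-2p-1=d-M$. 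The product then runs over $n=0,\dots,d-M-1$ with factors $\binom{d+n}{d-M-n}/\binom{2n+1}{n}$, and the power of two becomes $2^{-(d-M)}$. Multiplying by $2^{d-1}$ leaves $2^{M-1}$, which is the claimed formula.

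The main obstacle is matching the Harris–Tu formula precisely, with its indexing conventions and power of two, to the stated expression. I would verify this on small cases. For $M=d$, or more generally whenever $M-1\ge d-1$ so that $P$ is the whole space, the product is empty and the degree is $2^{d-1}$; this agrees with the formula only when $d-M-1<0$, as required. For $d=3$, $M=1$, we have $P=\{0\}$ and the degree is $2^{2}=4$. The formula gives $2^0\cdot\binom{3}{2}/\binom11\cdot\binom{4}{0}/\binom31=3\cdot\tfrac13=1$, which fails, so the normalisation must be re-checked before writing out the final computation. A secondary point is justifying the product decomposition. For this I would confirm from \cite[Theorem 3.4]{AFS19} that the ideal of $\mathcal M_{d,M}$ is generated separately by the $2\times2$ minors of the symmetric part and the Pfaffians of the skew part.
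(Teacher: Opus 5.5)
Your argument is correct and is essentially the paper's proof. You identify $\mathcal M_{d,M}$ with the product of the rank-one symmetric cone (degree $2^{d-1}$) and the cone of skew matrices of rank $\le M-1$ (the same as the paper's ``rank $\le M$'', since skew ranks are even), multiply the degrees, and substitute $2p=M-1$ into Harris--Tu. Your Hilbert-series/join justification of the product step is a more careful version of the paper's transversality remark.

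The only error is in your sanity check. For $d=3$, $M=1$, $n=1$ one has $d-M-n=1$, so that factor is $\binom{4}{1}/\binom{3}{1}=4/3$, not $\binom{4}{0}/\binom{3}{1}$. The formula then gives $3\cdot\frac{4}{3}=4=2^{2}$ as it should, so no renormalisation is needed. One caveat: the formula, like Harris--Tu's, is meant for $M\le d$; for $M>d$ the empty product would give $2^{M-1}\neq 2^{d-1}$.
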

\begin{proof}
    The intersection of the space of matrices with symmetric part rank $1$ with the space of matrices with skew-symmetric part rank $\leq M$ is transverse, as symmetric and skew-symmetric parts define independent coordinates on the matrix space. Thus, the degree of $\mathcal M_{d,M}$ is just the product of the degree of the variety of symmetric matrices of rank $1$ times the degree of the variety of skew-symmetric matrices of rank $\leq M$. The former degree is $\deg(\mathcal{M}_{d,1}) = 2^{d-1}$ and the latter is computed in \cite[Proposition 12]{HARRIS198471}.
\end{proof}

As a special case, we obtain the degree of $\mathcal{M}_{d,3}$, whose formula had been observed in \cite[page 15]{AFS19}.

\begin{corollary}
   For all $d\geq 3$, 
$$
    \deg(\mathcal{M}_{d,3}) = \frac{2^{d-1}}{d-1}\binom{2d-4}{d-2}
$$
\end{corollary}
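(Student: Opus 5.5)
We specialize \Cref{cor:HARRIS198471} to $M=3$ and simplify the resulting product. For $d\geq 3$ the formula gives
$$\deg(\mathcal M_{d,3}) = 2^{2}\prod_{n=0}^{d-4}\frac{\binom{d+n}{d-3-n}}{\binom{2n+1}{n}},$$
so the claim is equivalent to the identity
$$P(d):=\prod_{n=0}^{d-4}\frac{\binom{d+n}{d-3-n}}{\binom{2n+1}{n}} = \frac{2^{d-3}}{d-1}\binom{2d-4}{d-2}.$$
The case $d=3$ is the empty product $P(3)=1$, which matches $\frac{1}{2}\binom{2}{1}=1$. This also agrees directly with $\mathcal M_{3,3}=\mathcal M_{3,1}$-type reasoning, since for $d=3$ the skew-symmetric rank condition is vacuous.

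Rather than induct on $d$ (the factors depend on $d$ as well as $n$), I will identify $P(d)$ with the degree of the variety of skew-symmetric $d\times d$ matrices of rank $\leq 2$. That degree is classically a Catalan number, $\frac{1}{d-2}\binom{2d-6}{d-3}$ (via the Plücker embedding of $\mathrm{Gr}(2,d)$). This is a safe shortcut because the proof of \Cref{cor:HARRIS198471} writes the degree exactly as $2^{d-1}$ times this skew degree. Then
$$\deg(\mathcal M_{d,3}) = \frac{2^{d-1}}{d-2}\binom{2d-6}{d-3}.$$

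Comparing this with the target forces me to check the target formula itself. At $d=4$ the target gives $\frac{8}{3}\binom{4}{2}=16$. The Catalan route gives $8\cdot\frac12\binom{2}{1}=8$. The Harris product at $d=4$ has the single factor $n=0$, namely $\binom{4}{1}/\binom{1}{0}=4$, so it gives $4\cdot 4=16$. These disagree: the Harris–Tu product does not seem to equal the Grassmannian degree here. So one of my two readings of the skew-rank threshold is wrong. With ``rank $\leq M$'' for odd $M=3$, the skew rank is really $\leq 2$, and the intended index in Harris–Tu's formula may be shifted.

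The main step is therefore to pin down the correct index convention and then prove the product identity. I will trust the paper's product formula, since it matches the target at $d=4$, and prove $P(d)=\frac{2^{d-3}}{d-1}\binom{2d-4}{d-2}$ by telescoping. Writing all binomials as factorials, the product should collapse: the numerators $\binom{d+n}{d-3-n}=\frac{(d+n)!}{(d-3-n)!\,(2n+3)!}$ combine with $\binom{2n+1}{n}^{-1}=\frac{n!\,(n+1)!}{(2n+1)!}$ into ratios of superfactorial-type products. I will simplify these using the duplication formula $(2n+1)!=\frac{2^{2n+1}n!\,\Gamma(n+3/2)}{\sqrt{\pi}}$, and I expect the powers of $2$ to produce the factor $2^{d-3}$.

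The obstacle is this bookkeeping. As a fallback, I would verify the ratio $P(d+1)/P(d)$ and show it equals $\frac{2(2d-3)(d-1)}{d(d-1)}\cdot\frac{d-1}{d-1}$-type expressions matching $\frac{2(2d-2)(2d-3)(d-1)}{d\,(d-1)^2}$, the ratio of the right-hand sides.
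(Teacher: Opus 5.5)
You reduce the claim correctly to $P(d)=\frac{2^{d-3}}{d-1}\binom{2d-4}{d-2}$, but you never prove this identity, so there is a genuine gap. The telescoping step is only announced (``the product should collapse'', ``I expect the powers of $2$ to produce the factor $2^{d-3}$''). In the fallback you never compute $P(d+1)/P(d)$. The expression you propose for it simplifies to $\frac{2(2d-3)}{d}$, which is half of the ratio $\frac{4(2d-3)}{d}$ of the right-hand sides that you correctly write down.

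Your reason for avoiding induction, that the factors depend on $d$, is not an obstacle. For $n\le d-4$ each factor changes by $\binom{d+1+n}{d-2-n}/\binom{d+n}{d-3-n}=\frac{d+1+n}{d-2-n}$. The new factor at $n=d-3$ is $(2d-2)/\binom{2d-5}{d-3}$. Multiplying these gives $P(d+1)/P(d)=\binom{2d-2}{d}/\binom{2d-5}{d-3}=\frac{4(2d-3)}{d}$. Together with $P(3)=1$, this is exactly the paper's inductive proof.

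The Grassmannian route you abandoned would have been a complete and cleaner proof; it failed only because you used the wrong Catalan number. Skew-symmetric $d\times d$ matrices of rank $\le 2$ form the affine cone over $\mathrm{Gr}(2,d)$ in its Pl\"ucker embedding. Its degree is $\frac{1}{d-1}\binom{2d-4}{d-2}$, not $\frac{1}{d-2}\binom{2d-6}{d-3}$: already $\mathrm{Gr}(2,4)$ is the Klein quadric, of degree $2$, whereas your formula gives $1$.

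With the correct value, $\deg(\mathcal M_{d,3})=2^{d-1}\cdot\frac{1}{d-1}\binom{2d-4}{d-2}$ is precisely the target formula. The ``disagreement'' at $d=4$ disappears, since $8\cdot 2=16$. There is also no index shift in Harris--Tu. By the paper's theorem the skew degree is $2^{-(d-3)}P(d)$, not $P(d)$ as you write, and it equals this Catalan number.

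Compared with the paper's induction, this route bypasses the Harris--Tu product entirely. It needs only two inputs: the splitting of $\mathcal M_{d,3}$ into a symmetric rank-$1$ part and a skew rank-$\le 2$ part in independent coordinates, and the classical degree of $\mathrm{Gr}(2,d)$.
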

\begin{proof}
    By \Cref{cor:HARRIS198471}, the claim reduces to $$\frac{1}{d-1}\binom{2d-4}{d-2} = \frac{1}{2^{d-3}} \prod_{n = 0}^{d-4} \frac{\binom{d+n}{d-3-n}}{\binom{2n+1}{n}}$$ which can be shown by induction on $d$. For $d=3$, both expressions evaluate to $1$. Going from $d-1$ to $d$, the left hand side is multiplied by $\frac{d-2}{d-1} \cdot \frac{(2d-5)(2d-4)}{(d-2)^2} = 2  \frac{2d-5}{d-1}$, and the right hand side by
    $$\frac{\prod_{n=0}^{d-4} \frac{d+n}{d-1-n} }{2\binom{2(d-4)+1}{d-4}} = \frac{\binom{2d-4}{d-1}}{2\binom{2d-7}{d-4}}.$$
    Note that
    $$\binom{2d-4}{d-1} = \frac{(2d-4)(2d-5)(2d-6)}{(d-1)(d-2)(d-3)} \cdot \binom{2d-7}{d-4} = 4 \frac{2d-5}{d-1} \cdot \binom{2d-7}{d-4}.$$
    Thus, the right hand side is multiplied with $2  \frac{2d-5}{d-1}$ as well and we conclude.
\end{proof}

\subsection{Higher tensor varieties}

To compute signature varieties for $k>2$ we use computational methods.

\begin{example}
Let us compute the spline varieties $\mathcal S^0_{2,\leq 4,m}$ for $m \in \comp(3)$. The two compositions $(1,1,1)$ and $(3)$ correspond to piecewise linear and polynomial paths, whose projected signature varieties $\mathcal S^0_{d,m,k}$ were already considered in \cite{AFS19}. We compute the ideals of the varieties $\mathcal S^0_{d,m,\leq k}$ in the coordinate ring of $\mathcal G^{\leq k}(\mathbb C^d)$, see also \Cref{rem:lyndon-coordinates}.
 \par
All three varieties have dimension $6$, which is the number of parameters. The variety $\mathcal S_{d,(1,1,1),\leq 4}^0$ has codimension $2$ in the $8$-dimensional ambient space $\mathcal G^{\leq k}(\mathbb C^d)$. It is the complete intersection of one octic and one decic. The octic is given by the following polynomial, 
\begin{align*}
x_{1}^2x_{2}^2x_{1 2}^2 &+6x_{1 2}^4-12x_{2}x_{1 2}^2x_{1 1 2}+18x_{2}^2x_{1 1 2}^2-12x_{1}x_{12}^2x_{1 2 2}\\
&-36x_{1}x_{2}x_{1 1 2}x_{1 2 2}+18x_{1}^2x_{1 2 2}^2-24x_{2}^2x_{1 2}x_{1 1 12}\\
&+144x_{2}x_{1 2 2}x_{1 1 1 2}-6x_{1}^2x_{2}^2x_{1 1 2 2}+48x_{1}x_{2}x_{1 2}x_{1 1 22}\\
&-72x_{2}x_{1 1 2}x_{1 1 2 2}-72x_{1}x_{1 2 2}x_{1 1 2 2}+72x_{1 1 2 2}^2\\
&-24x_{1}^2x_{12}x_{1 2 2 2}+144x_{1}x_{1 1 2}x_{1 2 2 2}-288x_{1 1 1 2}x_{1 2 2 2}. 
\end{align*}\par
Note that the variables are graded according to \Cref{rem:lyndon-coordinates}.

The variety $\mathcal S_{d,(3),\leq 4}^0$ is cut out by three polynomials, in degrees $16,18$ and $20$.\par
Finally, we see that $\mathcal S_{d,\leq 4,(2,1)}^0$ is cut out by four polynomials, one of degree $16$, another of degree $20$, and the remaining two of degree $18$.\par
We can also compute the degrees of the varieties $\mathcal S_{d,4,(2,1)}$ that we obtain under the projection $\mathcal G^{\leq 4}(\mathbb R^2) \to (\mathbb R^2)^{\otimes 4}$. We see that for $m=(1,1,1)$ and $m=(3)$ we obtain the degrees $64$ and $192 = 3 \cdot 64$ that were already observed in \cite[Table 3]{AFS19}. Interestingly, the degree $128 = 2\cdot 64$ of the variety $\mathcal S_{2,4,(2,1)}$ interpolates between the two.
\end{example}

 \Cref{tab:tensor-var-d2k4} and \Cref{tab:tensor-var-d3k3} summarize dimension and degree for different signature varieties of geometric and parametric splines. Here, we view the signature varieties as embedded into truncated tensor space $T^{\leq k}(\mathbb C^d)$.

\begin{table}[ht!]
    \centering 
    \begin{subtable}{0.49\linewidth}
    \centering
        \begin{tabular}{c|ccc}
        $m$ $\backslash$ $r$ & 0 & 1 & 2 \\ \hline
         $(2,2)$ & 8 & $7^{\mathcal{S}},6^{\mathcal{P}}$ & $6^{\mathcal{S}},4^{\mathcal{P}}$ \\
         $(2,1,1)$ & 8 & $5^{\mathcal{S}},4^{\mathcal{P}}$ & - \\
        $(2,1)$ & 6 & $5^{\mathcal{S}},4^{\mathcal{P}}$ & - \\
         $(1,1,1)$ & 6 & $2^{\mathcal{S}},2^{\mathcal{P}}$ & -
        \end{tabular}
        \caption{Dimension}
    \end{subtable}%
    \begin{subtable}{0.49\linewidth}
    \centering
        \begin{tabular}{c|ccc}
        $m$ $\backslash$ $r$ & 0 & 1 & 2 \\ \hline
         $(2,2)$ & 1 & ? & $? ,96^{\mathcal{P}} $\\
         $(2,1,1)$ & 1 & $414^{\mathcal{S}}, 96^{\mathcal{P}}$ & - \\
        $(2,1)$ & ? & $414^{\mathcal{S}}, 96^{\mathcal{P}}$  & - \\
         $(1,1,1)$ & $276$&  $16^{\mathcal{S}}, 16^{\mathcal{S}}$ & -
        \end{tabular}
        \caption{Degree (in $T^{\leq 4} (\mathbb C^d)$)}\label{tab:tensor-var-d2k4-deg}
    \end{subtable}
    \caption{Dimension and degree for $\mathcal{S}^{r}_{2,\leq 4,m}$ and $\mathcal{P}^{r}_{2,\leq 4,m}$}
    \label{tab:tensor-var-d2k4}
\end{table}

 Note that the ambient space $\mathcal G^{\leq 4}(\mathbb C^2)$ of the varieties considered in \Cref{tab:tensor-var-d2k4} has dimension $8$. With a growing number of variables, symbolic computations became infeasible, indicated by question marks in \Cref{tab:tensor-var-d2k4-deg}. In this case, we used the \textsc{Macaulay 2} package \texttt{NumericalImplicitization} \cite{NumericalImplicitizationArticle} to compute the missing dimensions. We were not able to determine the missing degrees numerically.

\begin{table}[ht!]
    \centering
    \begin{tabular}{c|ccc}
        $m$ $\backslash$ $r$ & 0 & 1 & 2 \\ \hline
         $(3,2)$ & 14 & $13^{\mathcal{S}}, 12^{\mathcal{P}}$ & $11^{\mathcal{S}}, 9^{\mathcal{P}}$ \\
         $(3,1,1)$ & 14 & $10^{\mathcal{S}},9^{\mathcal{P}}$ & - \\
        $(2,2)$ & 12 & $10^{\mathcal{S}},9^{\mathcal{P}}$ & $7^{\mathcal{S}},6^{\mathcal{P}}$ \\
         $(2,1,1)$ & 12 & $7^{\mathcal{S}},6^{\mathcal{P}}$ & - \\
          $(2,1)$ & 9 & $7^{\mathcal{S}},6^{\mathcal{P}}$& - \\
    \end{tabular}
    \caption{Dimension for $\mathcal{S}^{r}_{3,\leq 3,m}$ and $\mathcal{P}^{r}_{3,\leq 3,m}$}
    \label{tab:tensor-var-d3k3}
\end{table}

In \Cref{tab:tensor-var-d3k3}, the ambient space $\mathcal G^{\leq 3}(\mathbb C^3)$ is 14-dimensional. Again, we computed the dimensions using \texttt{NumericalImplicitization}. 

\begin{remark}
    For most of the varieties in \Cref{tab:tensor-var-d2k4} and \Cref{tab:tensor-var-d3k3}, the dimension is either the dimension of the ambient space or the number of parameters. This is in line with the observations from \cite{AFS19}; cf.\ Conjecture 6.10 in loc.\ cit. However, there is one notable exception: the variety $\mathcal S^{2}_{3,3,(2,2)}$ has dimension $7$, even though the number of parameters is $2\cdot 3 + 2 = 8$.
\end{remark}

\section{Spline recovery}

An important inverse problem in rough analysis is the task of recovering a path from Chen’s iterated-integrals signature; see \cite{lyons2017hyperbolic} for a general overview. This task is also known as the \emph{learning problem}. The influential work \cite{pfeffer2019learning} formalized this problem via group orbits under matrix-tensor congruence, and it establishes identifiability results, both exact and numerical, for piecewise linear,  polynomial, and generic paths.
For truncation level $k=3$ and piecewise linear paths with $M=d$ segments, there is an efficient algorithm for path recovery \cite[Theorem 1.1 and Algorithm 3.1]{schmitz2025efficient}. In this case the learning problem is known to have a unique solution; see \cite[Theorem 6.2]{pfeffer2019learning}. 
\par
We study the learning problem for splines, i.e., we investigate the fibers of the signature map restricted to geometric or parametric $m$-splines for a fixed truncation $k$. Contrary to the setting above, the learning problem is not (uniquely) solvable in general. This motivates the following notion of path recovery degree. 

\begin{definition}\label{def:recDeg}
   For $d,k,r\in\mathbb{N}$ and $m\in\comp_{\geq r}(M)$, the \emph{recovery degree for geometric $m$-splines} 
   $$\mathsf{PRdeg}(\mathcal{S}_{d,\leq k,m}^r)\in\mathbb N \cup \{ \infty \}$$ is given by the number of (complex) points in a generic fiber of $\sigma^{\leq k}$ on $\mathcal{S}_{d,\leq k,m}^r$. Analogously, we define the \emph{recovery degree for parametric $m$-splines}, denoted by $\mathsf{PRdeg}(\mathcal{P}_{d,\leq k,m}^r)$.
\end{definition}

\begin{example}
    Consider the class of geometric $(2,1)$-splines of regularity $1$ in $\mathbb R^2$. According to \Cref{thm:geom-spline-dict} it is parametrized by a $2\times 2$-matrix $A$ and a scalar $\rho$. The associated map
    \begin{align*}
        \mathbb R^{2\times 2} \times \mathbb R &\to \mathcal G^{\leq k}(\mathbb R^2) \\
        (A, \rho) &\mapsto A B_\rho * \sigma^{\leq 3}(\mathrm{PwMom}^{(2,1)})
    \end{align*}
    parametrizing $\mathcal S^1_{2,\leq 3,(2,1)}$ is generically finite-to-one. A generic fiber of this map contains two points, i.e., $\mathsf{PRdeg}(\mathcal S^1_{2,\leq 3,(2,1)}) = 2$.

    We are particularly interested in the real points of the fiber, as those correspond to path solutions. Among these, only the points with $\rho > 0$ correspond to spline reconstructions.

    By computing a Gröbner basis of the generic fiber ideal with respect to an elimination order, we see that the two points of the fiber correspond to the two solutions of a quadratic polynomial $q$ in $\rho$, given by $\rho^{2}+c\rho + \frac{1}{6}c$ with
    $$c = \frac{-x_{1}^{2}x_{2}^{2}+24\,x_{1}x_{12}x_{2}+36\,x_{12}^{2}-60\,x_{1}x_{122}-60\,x_{112}x_{2}}{30\,x_{1}x_{12}x_{2}+60\,x_{12}^{2}-90\,x_{1}x_{122}-90\,x_{112}x_{2}}$$
    where we denoted the coordinates on $\mathcal G^{\leq k}(\mathbb R^2)$ by $x_i$ (see \Cref{rem:lyndon-coordinates}). We remark that as an element of the tensor algebra, the denominator simplifies to
    $$30 (2 \cdot e_1 \otimes e_2 \otimes e_1 \otimes e_2 - 5 \cdot e_1 \otimes e_1 \otimes e_2 \otimes e_2).$$
    One checks that the quadratic polynomial $q$ has at most one positive real solution. That is, the truncated signature $\sigma^{\leq 3}$ characterizes geometric $(2,1)$-splines of regularity $1$ in the plane uniquely. \Cref{fig:m21-fiber-paths} shows the two points for a specific fiber of the signature map: one of them is a spline, the other one is a `cusp' solution where $\rho < 0$.
\end{example}

\begin{figure}[ht!]
    \centering
    \includegraphics[width=0.4\linewidth]{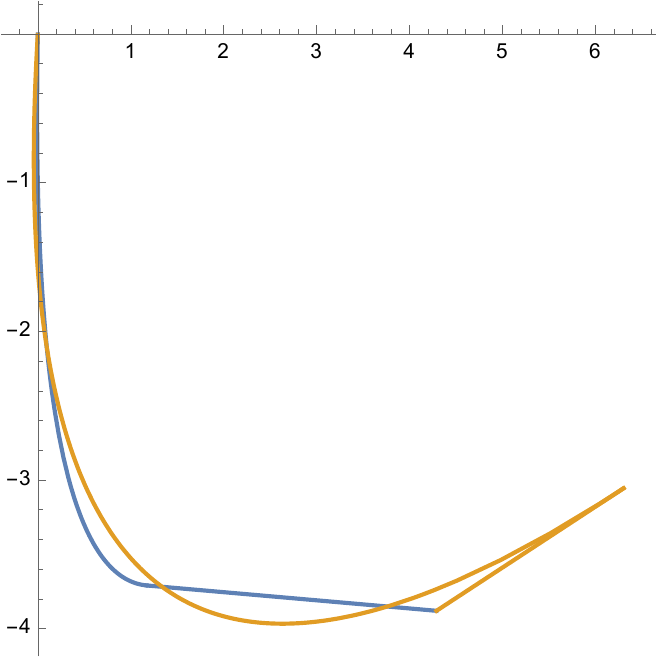}
    \caption{Two paths witnessing $\mathsf{PRdeg}(\mathcal S^r_{2,\leq 3,(2,1)}) = 2$.}\label{fig:m21-fiber-paths}
\end{figure}

Similar to \cite{pfeffer2019learning}, we study the recovery degree in general by determining the ideal of a generic fiber. The degrees of these ideals can then be related to path recovery degrees; see \Cref{prop:recoverydegviaIdeal}. For this construction we use the core tensors according to \Cref{cor:coretensorforParametricSplines}.

\begin{definition}\label{prop:pathrecoveryIdeal_params}
Let $C_\rho$ be the family of core tensors for $\mathcal{S}_{d,k,m}^{r}$.  For every $A\in\mathbb{R}^{d\times (M-rd)},\eta \in \mathbb R^{r \times (\ell-1)}$ we have 
   \begin{align*} I_{A,\eta}
   &
   :=\left\langle (A*C_\eta-a*C_\rho)_w\mid w\right\rangle\label{eq:param_normal}
   \end{align*}
   where $a=(a_{i,j})_{1\leq i\leq d,1\leq j\leq (M-rd)}$ and $\rho=(\rho_{i,j})_{1\leq i\leq r,1\leq j\leq (\ell-1)}$ are matrices of our $d(M-rd)+r(\ell-1)$ polynomial ring variables. 
\end{definition}

Let $I_{A,\eta}$ be the ideal from \Cref{prop:pathrecoveryIdeal_params} for generic $A$ and $\eta$. If $\dim I_{A,\eta} = 0$, then
\begin{equation}\label{prop:recoverydegviaIdeal}
    \mathsf{PRdeg}(\mathcal{S}_{d,\leq k,m}^r)=\deg (I_{A,\eta}).
\end{equation}
Similarly, if $\dim(J_{A}) = 0$, we set $J_{A} := I_{A,1} + \langle\rho_{ij} - 1\mid i,j\rangle$ and have 
$$\mathsf{PRdeg}(\mathcal{P}_{d,\leq k,m}^r)=\deg (J_{A}).$$

The simplest setting is again for regularity $r=0$. Then geometric and parametric $m$-splines are piecewise polynomial paths with segments of degrees $m_i$. In \Cref{tab:possibleRecoveries_d2k4_8param_r0} we present all recovery degrees for $m\in\comp(4)$ with $d=2$ and $k=4$. This refines \cite[Remark 6.13]{AFS19}, where the recovery degrees were presented for piecewise linear and polynomial paths, corresponding in our language to $(1,1,1,1),(4)\in\comp(4)$, respectively. 
We use the construction of $J_{A}$ according to \eqref{prop:recoverydegviaIdeal} and compute its degree. 

\begin{table}[ht!]
    \centering
    $\begin{array}{l|l}
      m\in\comp(4) & \mathsf{PRdeg}(\mathcal{S}_{2,\leq 4,m}^0)\\\hline
   (1,1,1,1)& 4  \\\hline
   (1,2,1)  & 18 \\\hline
   (2,1,1), (1,1,2) & 14 \\\hline
   (2,2)    & 10  \\\hline
    (3,1),(1,3)  & 40 \\\hline
   (4)     & 48
\end{array}$
\caption{All recovery degrees for $\mathcal{S}_{2,\leq 4,m}^0$ and $M=4$.
\label{tab:possibleRecoveries_d2k4_8param_r0}}
\end{table}

We move on to parametric spline recovery. The vanishing ideals from \Cref{prop:pathrecoveryIdeal_params} are zero-dimensional if and only if the number of parameters agrees with $\dim(\mathcal P^r_{d,\leq k,m})$. In this case, we observed in all examples we computed that $\mathsf{PRdeg}(\mathcal P^r_{d,\leq k,m}) = 1$ if $\mathcal P^r_{d,\leq k,m}$ is a strict subvariety of $\mathcal G^{\leq k}(\mathbb R^d)$. The path recovery degree becomes larger in the case where $\mathcal P^r_{d,\leq k,m} = \mathcal G^{\leq k}(\mathbb R^d)$ . \Cref{tab:possibleRecoveries_d2k4_8param} enumerates all choices of $m$ and $r$ such that the number of parameters $M-r(\ell -1)$ equals $\dim(\mathcal{P}_{2,\leq 4, m}) = \dim(\mathcal{G}^{\leq 4}(\mathbb R^2))=8$. We computed the path recovery degree in each case, grouping together all $r$ and $m\in\comp_{\geq r}(M)$ with coinciding degrees. According to  \Cref{prop:antipode} we can omit all redundant compositions based on reverse compositions and insertions of $r$ in $m$ with regularity $r$. We use the construction of $I_{A,\rho}$.

\begin{table}[ht!]
    \centering
    $\begin{array}{l|l|l}
      m\in\comp(M) & r &  \mathsf{PRdeg}(\mathcal{P}_{2,\leq 4,m}^r)\\\hline
   (2,2,2) ,  (2,1,2,1,2)    &1& 46  \\
     (1,2,2,2,1),  (1,2,1,2,1,2,1)   &&  \\\hline
      (2,2,2,1),(2,1,2,1,2,1)    &1& 50  \\\hline
     (2,2,1,2) ,(2,2,1,2,1)   &1& 52  \\
    (2,1,2,2,1),(1,2,2,1,2,1)   &&   \\\hline
      (1,4,1)   &1& 54   \\\hline
       (4,1)  ,(3,2)   , (3,2,1)  ,  (3,1,2) ,     &1& 56  \\
    (2,3,1), (3,1,2,1), (2,1,3,1)       &&  \\
    (1,3,2,1), (1,3,1,2,1)     &&  \\\hline
      (3,3) ,(3,2,3), (2,4,2)    &2 & 54  \\
     (2,3,3,2),(2,3,2,3,2) && \\\hline
         (4,2) ,(3,3,2) ,(3,2,3,2)  & 2& 60 \\\hline
           (3,4,3)     &3 & 54  \\\hline
      (4,3)    &3 & 60 
\end{array}$
\caption{Recovery degrees for parametric $m$-splines in the plane from the $4$-truncated signature
\label{tab:possibleRecoveries_d2k4_8param}}
\end{table}

Similarly, we compute recovery degrees for geometric spline recovery. Again, in the cases where $\mathcal S^r_{d,\leq k,m}$ is a strict subvariety of $\mathcal G^{\leq k}(\mathbb R^d)$, we observed $\mathsf{PRdeg}(\mathcal S^r_{d,\leq k,m}) = 1$. In the cases where $\mathcal S^r_{d,\leq k,m}$ attains the maximal dimension but is not overparametrized, \Cref{tab:rec-geom-plane} and \Cref{tab:rec-geom-space} show recovery degrees for $m$-splines in the plane and in $3$-space, for different choices of $m$ and $r$ and truncation levels $k$. Here, our recovery computations relied heavily on the F4 implementation provided by \texttt{msolve} \cite{msolve21}. 

\begin{table}[ht!]
    \centering
    $\begin{array}{l|l|l}
      m\in\comp(M) & r & \mathsf{PRdeg}(\mathcal{S}_{2,\leq 4,m}^r)\\\hline
   (2,2,1) & 1 & 32\\
   (2,1,2) &  &  \\\hline
   (1,3,1) & 1 & 96 \\\hline
    (3,2) & 2 & 116 
\end{array}$
\caption{Recovery degrees for geometric $m$-splines in the plane from the $4$-truncated signature\label{tab:rec-geom-plane}}
\end{table}

\begin{table}[ht!]
    \centering
    $\begin{array}{l|l|l}
      m\in\comp(M) & r & \mathsf{PRdeg}(\mathcal{S}_{3,\leq 3,m}^r)\\\hline
   (3,2,1) & 1 & 144\\\hline
   (3,1,2) & 1 & 84\\\hline
   (2,2,2) & 1 & 90\\\hline
   (4,2) & 2 & 312\\\hline
   (3,3) & 2 & 168
\end{array}$
\caption{Recovery degrees for geometric $m$-splines in $3$-space from the $3$-truncated signature\label{tab:rec-geom-space}}
\end{table}

\begin{figure}[ht!]
    \centering
    \begin{subfigure}{0.27\columnwidth}
        \centering
\includegraphics[width=\linewidth]{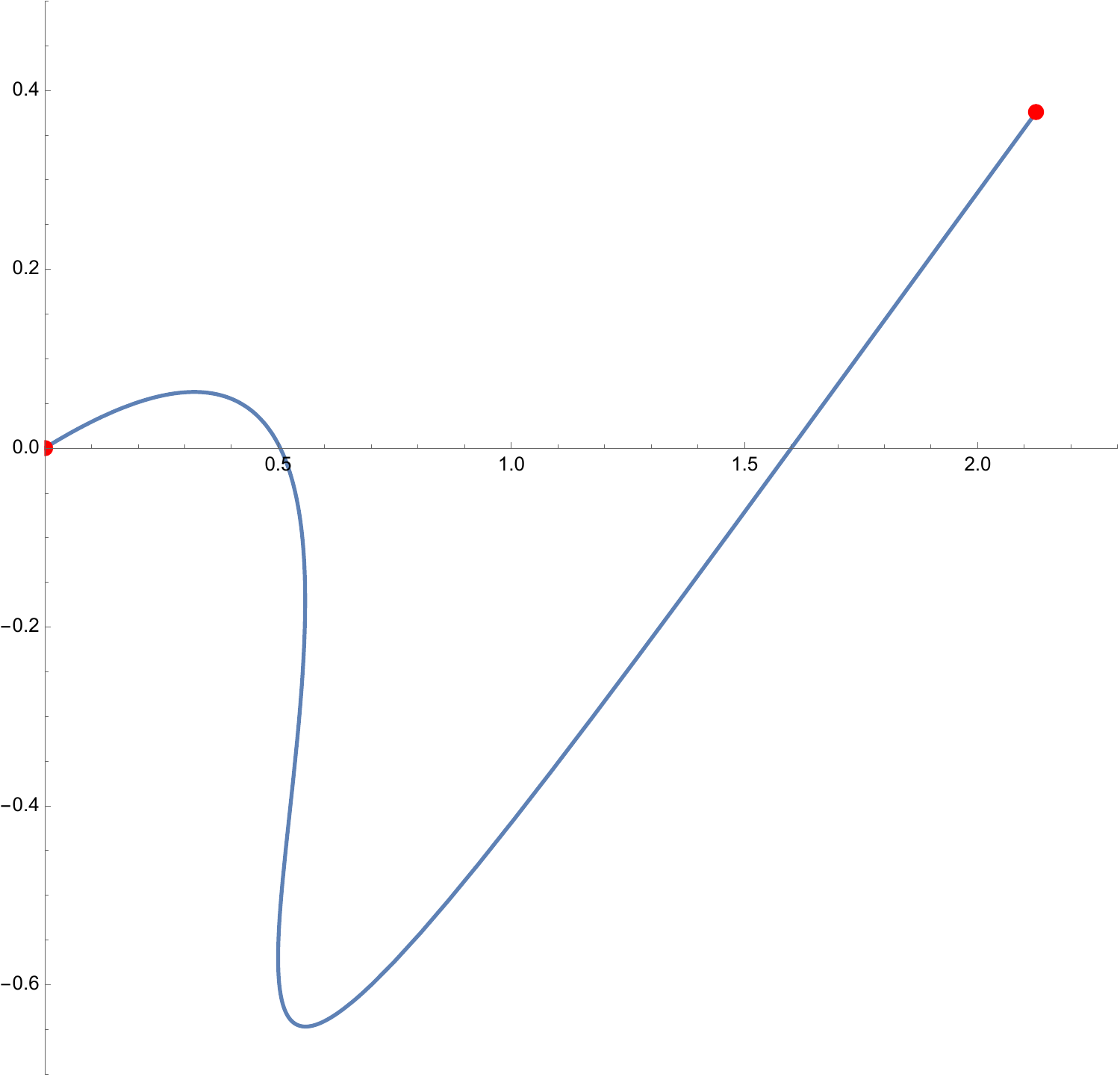}
\caption*{ 
$m=(4)$}
\end{subfigure}
\qquad
    \begin{subfigure}{0.27\columnwidth}
        \centering
\includegraphics[width=\linewidth]{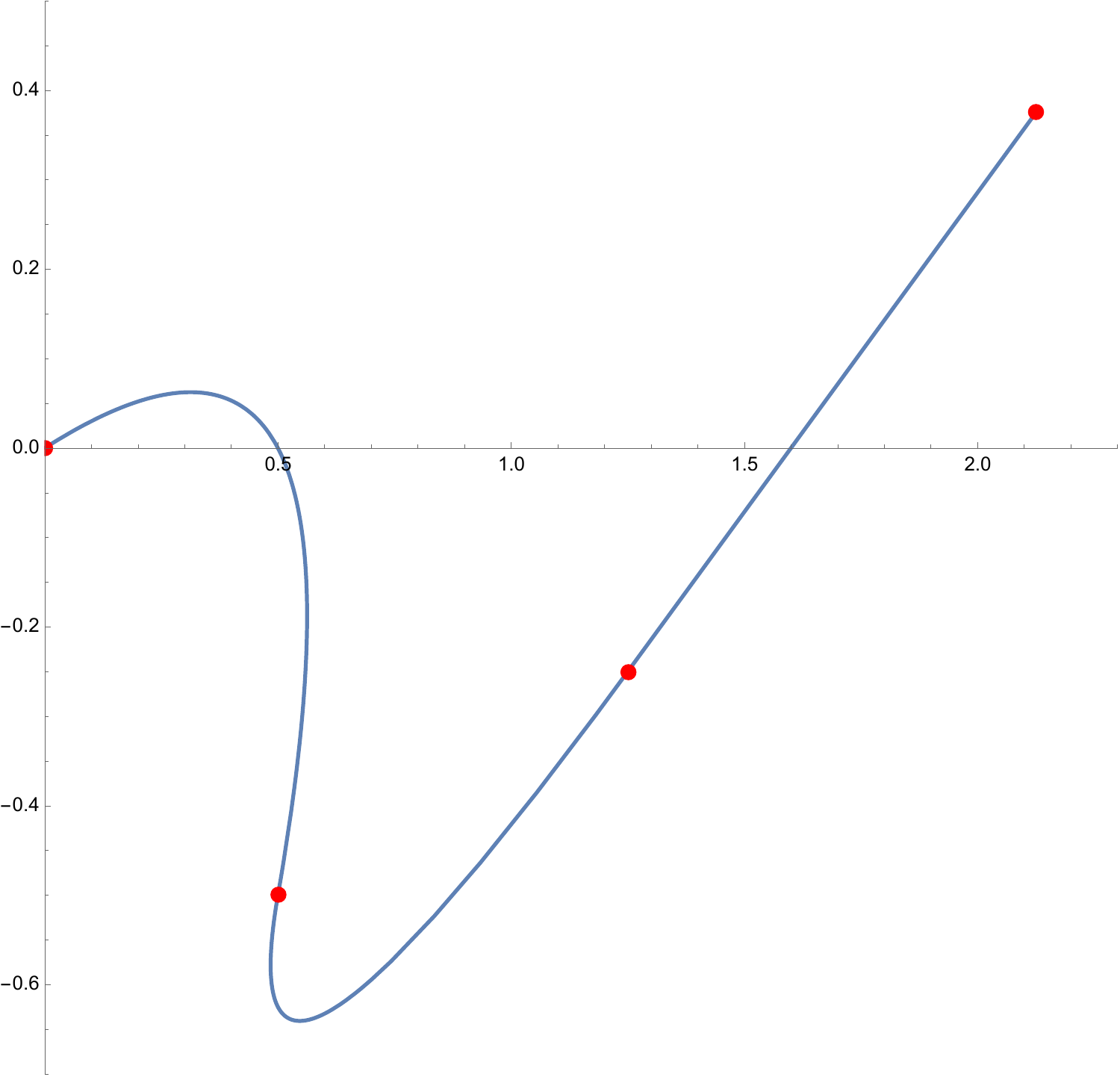}
\caption*{$m=(2,2,1)$}
\end{subfigure}
\hfill

\begin{subfigure}{0.27\columnwidth}
        \centering
\includegraphics[width=\linewidth]{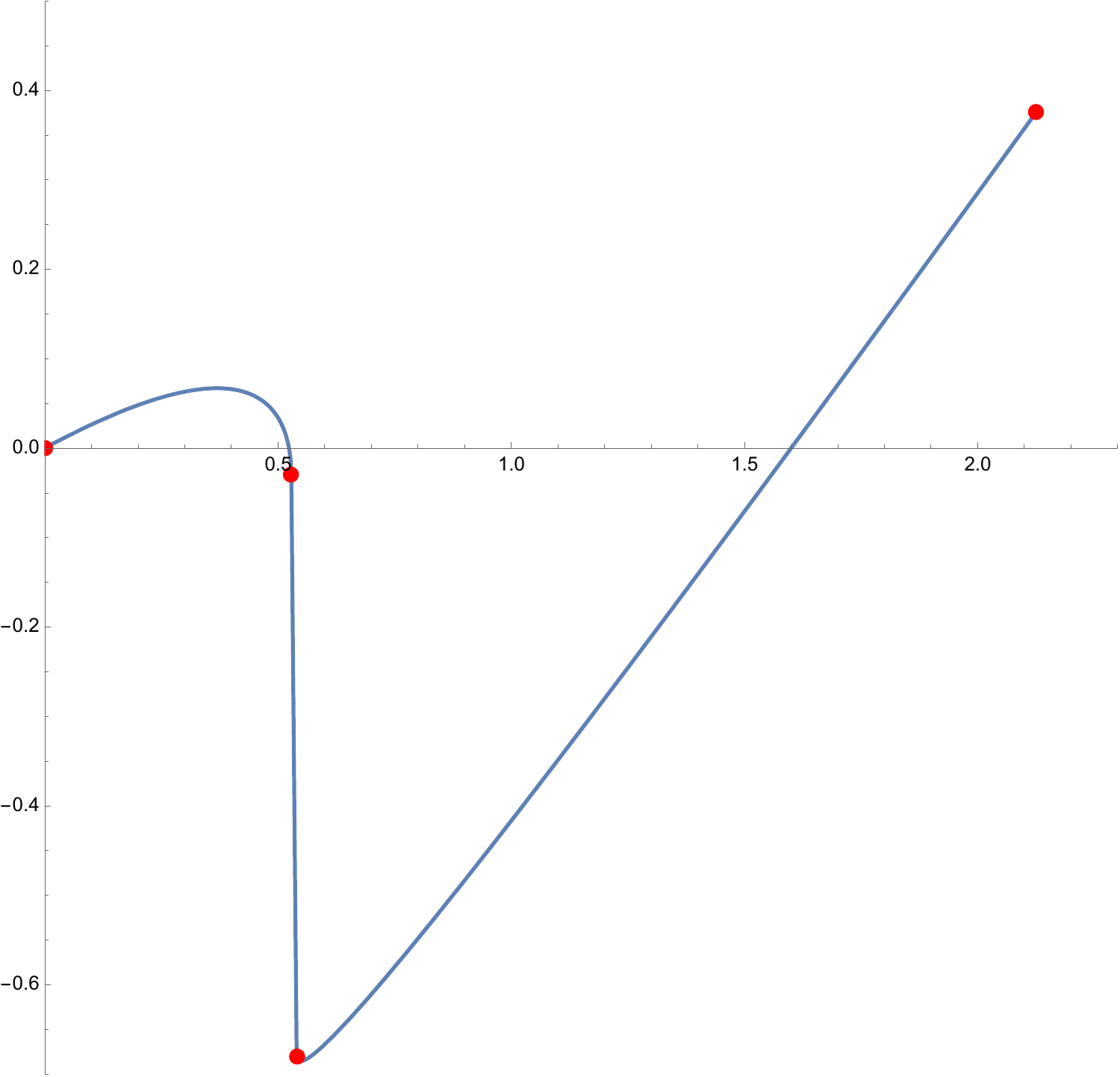}
\caption*{$m=(2,1,2)$}
\end{subfigure}
\qquad
\begin{subfigure}{0.27\columnwidth}
        \centering
\includegraphics[width=\linewidth]{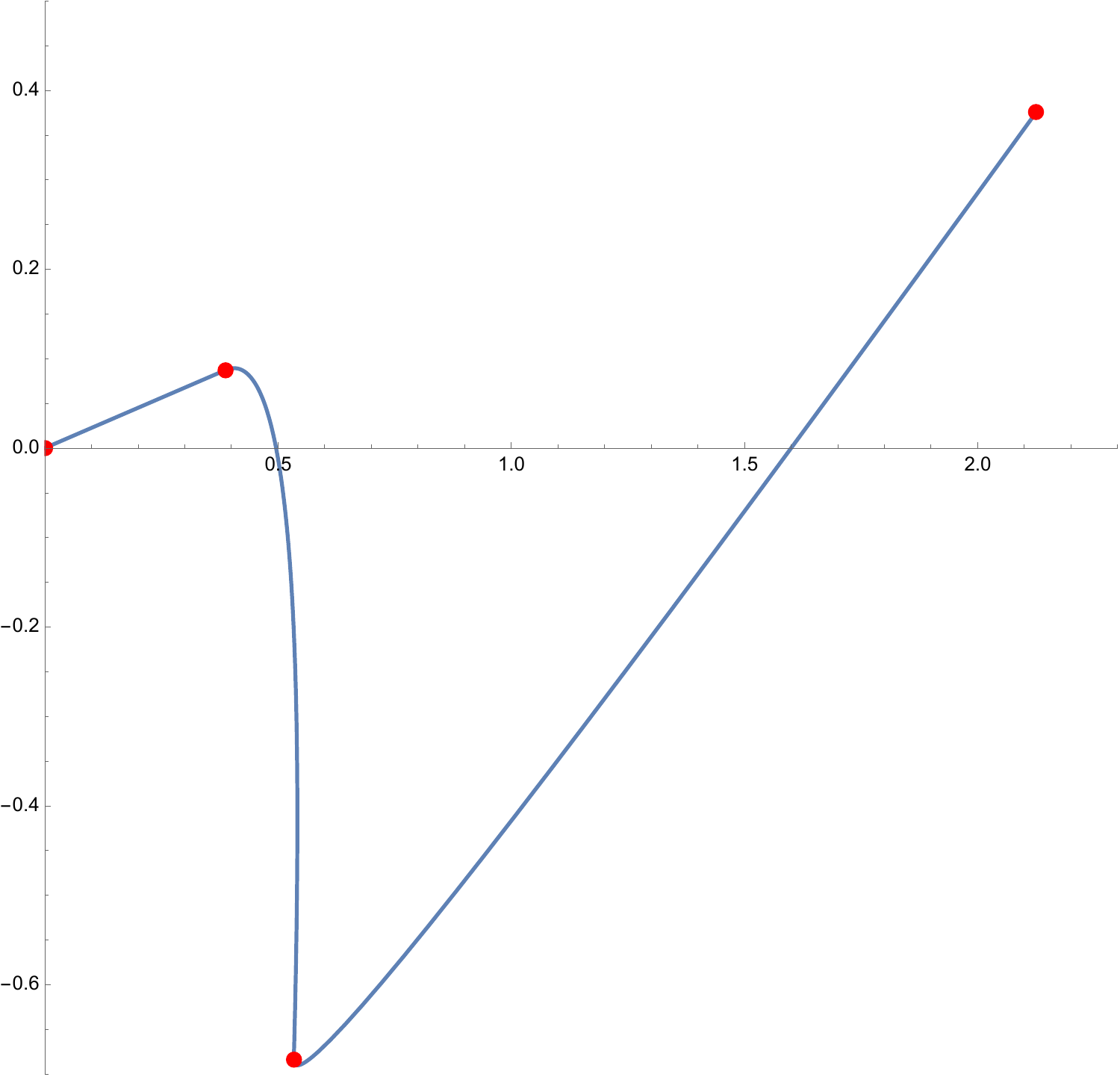}
\caption*{$m=(1,2,2)$}
\end{subfigure}
\caption{Splines of regularity $1$ for different $m$, with the same signature up to level $4$. Control points are marked in red.\label{fig:4-reconstr}}
\end{figure}

Using \texttt{HomotopyContinuation.jl} \cite{BT18} , we computed explicit solutions to a path recovery problem for $\mathcal S^{1}_{2,\leq 4,m}$, for different choices of $m$. According to \Cref{tab:rec-geom-plane}, the expected number of complex solutions is $32$ in each case. Almost all real solutions corresponded to `cuspidal' paths (like the one illustrated in \Cref{fig:m21-fiber-paths}). In fact, the spline solution was always unique, except for the case $m = (4)$, see \Cref{fig:4-reconstr}.

\newpage

\bibliography{refs.bib}
\end{document}